\newtheorem{theorem}{Theorem}[section]
\newtheorem{corollary}[theorem]{Corollary}
\newtheorem{lemma}[theorem]{Lemma}
\newtheorem{definition}[theorem]{Definition}
\newtheorem{hypothesis}[theorem]{Hypothesis}
\newtheorem{claim}{Claim}
\newtheorem{question}[theorem]{Question}
\newcommand{\Soc}{\mathrm{Soc}}
\newcommand{\G}{\mathrm{G}}
\newcommand{\Sz}{\mathrm{Sz}}
\newcommand{\Sym}{\mathrm{Sym}}
\newcommand{\Aut}{\mathrm{Aut}}
\newcommand{\Sp}{\mathrm{Sp}}
\newcommand{\PSL}{\mathrm{PSL}}
\newcommand{\A}{\mathrm{A}}
\newcommand{\C}{\mathrm{C}}
\newcommand{\D}{\mathrm{D}}
\newcommand{\Cos}{\mathrm{Cos}}
\newcommand{\Stab}{\mathrm{Stab}}
\newcommand{\Fix}{\mathrm{Fix}}
\newcommand{\N}{\mathrm{N}}
\newcommand{\Ree}{\mathrm{Ree}}
\begin{document}
\title{Vertex-primitive \(s\)-arc-transitive digraphs admitting a Suzuki or Ree group}
\date{}
\author{Lei Chen \quad \quad Michael Giudici \quad\quad Cheryl E. Praeger\\ \\
Department of Mathematics and Statistics\\
The University of Western Australia\\
35 Stirling Highway, Perth WA 6009\\
Australia\\ \\
\small Lei.Chen@research.uwa.edu.au\thanks{We thank the two anonymous referees for their careful reading of the paper and their advice}\thanks{\(^*\) The corresponding author}\\ 
\small Michael.Giudici@uwa.edu.au\\
\small Cheryl.Praeger@uwa.edu.au}
%\thanks{We thank the two anonymous referees for their careful reading of the paper and their advice.}}

\maketitle

\begin{abstract}
The investigation of \(s\)-arc-transitivity of digraphs can be dated back to 1989 when the third author showed that \(s\) can be arbitrarily large if the action on vertices is imprimitive. However, the situation is completely different when the digraph is vertex-primitive and not a directed cycle. In 2017 the second author, Li and Xia constructed the first infinite family of \(G\)-vertex-primitive \(2\)-arc-transitive examples, and asked if there is an upper bound on $s$ for  \(G\)-vertex-primitive $s$-arc-transitive digraphs that are not directed cycles. In 2018 the second author and Xia showed that if there is a largest such value of $s$ then it will occur when \(G\) is almost simple. So far it has been shown that $s\leqslant 2$ for almost simple groups whose socle is an alternating group or a projective special linear group. 
The contribution of this paper is to prove that \(s\leq1\) in the case of the Suzuki groups and the small Ree groups. We give constructions with \(s=1\) to show that the bound is sharp.

\end{abstract}

%\thanks{We thank the two anonymous referees for their careful reading of the paper and their advice.}
\section{Introduction}

The property of \(s\)-arc-transitivity has been well-studied for many years. Weiss \cite{W} proved that finite undirected graphs that are not cycles can be at most 7-arc-transitive. On the other hand, the third author \cite{Praeger} showed that for each \(s\) there are infinitely many finite \(s\)-arc-transitive digraphs that are not \((s+1)\)-arc-transitive. 

However, vertex-primitive \(s\)-arc-transitive digraphs for large \(s\) seem rare. Though extensive attempts had been made to find a vertex-primitive \(s\)-arc-transitive digraph for \(s\geq2\), no such examples were found until 2017 when the second author, with Li and Xia, constructed an infinite family of $2$-arc transitive examples in  \cite{example}. They also asked:
\begin{question}\label{qu}
Is there an upper bound on \(s\) for vertex-primitive \(s\)-arc-transitive digraphs that are not directed cycles?
\end{question}

A group \(G\) is said to be an \emph{almost simple group} if it has a unique \emph{minimal normal subgroup} \(T\) such that \(T\) is a nonabelian simple group. This implies (identifying $T$ with the group of inner automorphisms of $T$) that \(T\triangleleft G\leqslant \Aut(T)\).

We believe that the answer to Question \ref{qu} is yes.   Proving this and determining the value of the upper bound has been reduced in \cite[Corollary 1.6]{quasi} to the case where the vertex-primitive automorphism group is  almost simple. This has motivated the study of Question \ref{qu} for various families of almost simple groups. It has been shown that $s\leqslant 2$ if \(G=\) S\(_{m}\) or \(\A_{m}\) \cite{chen,alter}, or if \(\PSL_{n}(q)\leqslant G\leqslant \Aut(\PSL_{n}(q))\) \cite{linear}.

This paper determines an upper bound \(s\) for vertex-primitive \(s\)-arc-transitive digraphs whose automorphism groups are almost simple Ree or Suzuki groups, thus settling Question \ref{qu} for these two families of almost simple groups.

\begin{theorem}\label{general}
Let \(s\) be a non-negative integer and let \(\Gamma\) be a \(G\)-vertex-primitive \((G,s)\)-arc-transitive digraph, where \(G\) is almost simple with socle \(\Ree(3^{2n+1})\) or \(\Sz(2^{2n+1})\). Then \(s\leq 1\).
\end{theorem}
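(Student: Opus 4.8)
The plan is to pass to the standard coset description of arc-transitive digraphs and then reduce the whole problem to ruling out $2$-arc-transitivity. Since $(G,s)$-arc-transitivity implies $(G,2)$-arc-transitivity whenever $s\ge 2$, it suffices to show that no $G$-vertex-primitive $(G,2)$-arc-transitive digraph exists. Recall that every $G$-arc-transitive digraph is isomorphic to a coset digraph $\Cos(G,L,LgL)$, where $L=G_v$ is the stabiliser of a vertex and $g\in G$, subject to the connectivity condition $\langle L,g\rangle=G$ and the genuine-digraph condition $LgL\ne Lg^{-1}L$; vertex-primitivity is exactly the maximality of $L$ in $G$, and the out-valency equals $|L:L\cap L^{g}|\ge 2$ (it cannot be $1$, since an almost simple $G$ cannot act on a directed cycle). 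Writing ${}^gL=gLg^{-1}$, the $2$-arc $(Lg^{-1},L,Lg)$ shows that the arc-stabiliser $L\cap{}^gL$ must act transitively on the out-neighbourhood $L/(L\cap L^{g})$ of $L$, which is equivalent to the factorisation $L=(L\cap{}^gL)(L\cap L^{g})$.

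Set $A=L\cap{}^gL$ and $B=L\cap L^{g}$. Since $A=B^{g^{-1}}$, the two factors are $G$-conjugate proper subgroups of $L$, so $|A|=|B|\ge\sqrt{|L|}$; equivalently the out-valency $|L:B|$ is at most $\sqrt{|L|}$. Thus the core task becomes a problem about factorisations of the maximal subgroups of $G$: show that no maximal $L\le G$ admits a factorisation $L=AB$ into two $G$-conjugate proper subgroups of equal order realised by an element $g$ satisfying $\langle L,g\rangle=G$ and $LgL\ne Lg^{-1}L$.

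The next step is to make the list of candidates for $L$ explicit. Since $T\trianglelefteq G\le\Aut(T)$ with $\Aut(T)=T{:}C_{2n+1}$ (the only outer automorphisms being field automorphisms), the maximal subgroups of $G$ are obtained from Suzuki's classification for $\Sz(q)$ and Kleidman's classification for ${}^2\G_2(q)$ together with their behaviour under field automorphisms. For $T=\Sz(q)$ these are the Borel subgroup $[q^{2}]{:}C_{q-1}$, the dihedral group $D_{2(q-1)}$, the two Frobenius torus-normalisers $C_{q\pm\sqrt{2q}+1}{:}C_{4}$, and the subfield subgroups $\Sz(q_{0})$; for $T={}^2\G_2(q)$ they are the Borel subgroup $[q^{3}]{:}C_{q-1}$, the involution centraliser $2\times\PSL_{2}(q)$, the four-group normaliser $(2^{2}\times D_{(q+1)/2}){:}3$, the two Frobenius torus-normalisers of order $6(q\pm 3^{n+1}+1)$, and the subfield subgroups ${}^2\G_2(q_{0})$. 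I would treat the small maximal subgroups first: the torus-normalisers and the four-group normaliser are (meta)cyclic-by-small, and their rigid normal-Hall-subgroup structure leaves no room for a factorisation into two conjugate subgroups each of order at least $\sqrt{|L|}$, while for the subfield subgroups either the same obstruction applies inside $L$ or the generation condition $\langle L,g\rangle=G$ fails.

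The main obstacle will be the two largest maximal subgroups: the Borel subgroups in both families and, in the Ree case, the involution centraliser $2\times\PSL_{2}(q)$. For a Borel subgroup $L=Q{:}C_{q-1}$, with unipotent radical $Q$ a $p$-group of order $q^{2}$ or $q^{3}$, I would exploit the description of $Q$ and its layers as $C_{q-1}$-modules: any factorisation $L=AB$ forces strong constraints on how $A$ and $B$ meet $Q$ and the cyclic complement, and I expect these to be incompatible with $A,B$ being conjugate of order $\ge\sqrt{|L|}$, or else to force $LgL=Lg^{-1}L$ so that the digraph degenerates to an undirected graph. For the involution centraliser I would invoke the known classification of factorisations of $\PSL_{2}(q)$ and check case by case that none lifts to a factorisation of $2\times\PSL_{2}(q)$ by two $G$-conjugate proper subgroups meeting the digraph and generation conditions. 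Verifying the genuine-digraph condition $LgL\ne Lg^{-1}L$ and the conjugacy $A=B^{g^{-1}}$ uniformly across these cases, rather than the factorisation count alone, is the delicate part. Once every maximal $L$ is eliminated we conclude $s\le 1$, and matching $s=1$ constructions (choosing $L$ maximal and $g$ with $LgL\ne Lg^{-1}L$ and $\langle L,g\rangle=G$) show that the bound is attained.
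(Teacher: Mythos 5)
Your overall strategy---pass to the coset digraph, extract the homogeneous factorisation $G_v=AB$ with $A,B$ conjugate by the element $g$ shifting the arc, and then run through the maximal subgroups of $G$---is exactly the paper's strategy, and your preliminary reductions (including $|A|=|B|\ge\sqrt{|G_v|}$) are correct. But there is one concrete gap and several places where the hard work is asserted rather than done.

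The gap is the Borel subgroup. You propose to analyse $L=Q{:}\C_{q-1}$ via the module structure of $Q$ and ``expect'' the factorisation constraints to be incompatible, \emph{or else} to force $LgL=Lg^{-1}L$. The first branch is a dead end you cannot close: the factorisation machinery does not eliminate the Borel subgroup, and trying to rule out homogeneous factorisations of $Q{:}\C_{q-1}$ directly is not the right move. The decisive observation, which you circle but never state, is that the Borel subgroup is precisely a point stabiliser in the natural $2$-transitive action of $\Sz(q)$ on $q^2+1$ points (resp.\ of ${}^{2}\G_{2}(q)$ on $q^3+1$ points), so $G$ acts $2$-transitively on $G/L$, every nontrivial orbital is self-paired, and the condition $LgL\ne Lg^{-1}L$ fails for \emph{every} $g$: no genuine digraph exists at all. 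Without this, your treatment of the largest maximal subgroup is incomplete. Secondly, the remaining cases are only sketched, and some of your stated obstructions are off target: for the subfield subgroups $\Sz(q_0)$ and ${}^{2}\G_{2}(q_0)$ there is no ``normal Hall subgroup'' obstruction and the generation condition $\langle L,g\rangle=G$ is automatic by maximality; what is actually needed is that the almost simple group $G_v/\C_{G_v}(\Soc(L_v))$ admits no core-free factorisation (Liebeck--Praeger--Saxl), forcing one factor to contain the simple socle, which then must be normalised by $g$ --- and the degenerate Ree case $q_0=3$, where the subfield subgroup is $\PSL_2(8){:}3$ rather than simple, requires a separate hands-on analysis of the subgroup structure of $\PSL_2(8)$ that your outline does not anticipate. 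Similarly, for $2\times\PSL_2(q)$ the clean route is a primitive-prime-divisor argument showing $A\cap\PSL_2(q)$ has order divisible by $3$ and a primitive prime divisor of $q^2-1$, hence equals $\PSL_2(q)$, again contradicting the fact that $g$ normalises no nontrivial proper normal subgroup of $G_v$. The torus-normaliser and four-group-normaliser cases do go through essentially as you indicate (a unique subgroup of order $p$ for a suitable prime $p$ would have to be normalised by $g$), so those assertions are recoverable.
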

 
  We juxtapose the Suzuki groups and the Ree groups in this paper as many similarities can be found between these two kinds of exceptional simple groups: (1) the Suzuki groups \(\Sz(2^{2n+1})\) bear a relation to the symplectic groups \(\Sp_{4}(2^{2n+1})\) similar  to that of the  Ree groups \(\Ree(3^{2n+1})\) to \(\G_{2}(3^{2n+1})\); (2) the maximal subgroup types of the Suzuki groups and the  Ree groups are fairly similar; (3) the only outer automorphisms of the two groups are field automorphisms. Hence we are able to apply similar arguments to both.

We now remind readers of some terms mentioned above.
A \emph{digraph} \(\Gamma\) is a pair \((V,\to)\) such that \(V\) is the set of vertices and \(\to\) is an anti-symmetric and irreflexive relation on \(V\). For a non-negative integer \(s\), we call a sequence \(v_{0}, v_{1},\dots, v_{s}\) in \(V\) an \emph{\(s\)-arc} if \(v_{i}\to v_{i+1}\) for each \(i\in\{0,1,\dots,s-1\}\). Note that a 1-arc is simply called an \emph{arc}. For \(G\leqslant \Aut(\Gamma)\), we say that \(\Gamma\) is a \((G,s)\)-arc-transitive digraph if \(G\) acts transitively on the set of \(s\)-arcs of \(\Gamma\). We note that an \((s+1)\)-arc-transitive digraph is naturally \(s\)-arc-transitive if every \(s\)-arc extends to an \((s+1)\)-arc. Note that \((s+1)\)-arc-transitive implies \(s\)-arc-transitive for a \(G\)-vertex-primitive digraph. A transitive subgroup \(G\leqslant \Sym(\Omega)\) is said to be primitive if it does not preserve any non-trivial partition of \(\Omega\). For \(G\leqslant \Aut(\Gamma)\), we say that \(\Gamma\) is \emph{\(G\)-vertex-primitive} if \(G\) acts primitively on \(V\). A digraph is said to be \emph{finite} if \(|V|\) is finite and all the digraphs we consider in this paper will be finite. Note that a graph \(\Sigma\) is a pair \((V,\sim)\) such that \(V\) is the vertex set and \(\sim\) is a symmetric and irreflexive relation on \(V\). As in the case of digraphs, a sequence \(v_{0}, v_{1},\ldots, v_{s}\) is an \emph{\(s\)-arc} if \(v_{i}\sim v_{i+1}\) for \(0\leq i\leq s-1\) and \(v_{i}\neq v_{i+2}\) for each \(i\). A graph \(\Sigma\) is said to \emph{\(s\)-arc-transitive} if the automorphism group of \(\Sigma\) acts transitively on the set of \(s\)-arcs.

\section{Preliminaries}
\subsection{Notation}
We begin by defining some group theoretic notation:
\\

For a group \(X\), we denote by \(\Soc(X)\) the socle of \(X\), and by \(\Pi(X)\) the set of prime divisors of \(|X|\). For a prime number \(p\) and an integer \(n\), we denote by \(n_{p}\) the \(p\)-part of \(n\), which is the largest power of \(p\) dividing \(n\).
\\

The expression \(n\) or \(\C_{n}\) denotes a cyclic group of order \(n\) while \([n]\) denotes an unspecified group of order \(n\). The expression \(p^{n}\) denotes an elementary abelian group of order \(p^{n}\), that is, a direct product of \(n\) copies of \(\C_{p}\).

Extensions of groups are written in one of the following ways: \(A\times B\) denotes a direct product of \(A\) and \(B\); also \(A:B\) denotes a semidirect product of \(A\) by \(B\); and \(A.B\) denotes an unspecified extension of \(A\) by \(B\).

For groups \(A\) and \(B\) such that \(B\leqslant A\), we denote by \(\N_{A}(B)\) the normaliser of \(B\) in \(A\), and \(\C_{A}(B)\) the centraliser of \(B\) in \(A\).

\begin{lemma}{\rm \cite[Lemma 2.1]{linear}\label{Zsigmondy}}
For any positive integer \(n\) and prime \(p\), we have \((n!)_{p}<p^{\frac{n}{p-1}}\).

\end{lemma}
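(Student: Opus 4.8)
The plan is to reduce the statement to a bound on the \(p\)-adic valuation of \(n!\) and then apply Legendre's formula. Writing \(v_p(m)\) for the exponent of \(p\) in the prime factorisation of a positive integer \(m\), so that by definition \(m_p = p^{v_p(m)}\), the task is exactly to show that \(v_p(n!) < n/(p-1)\), after which the lemma follows immediately by exponentiating both sides with base \(p\). For \(v_p(n!)\) I would invoke Legendre's formula,
\[
v_p(n!) = \sum_{i=1}^{\infty} \left\lfloor \frac{n}{p^i} \right\rfloor,
\]
noting that this is in fact a finite sum, since every term with \(p^i > n\) vanishes.

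The core estimate is then the elementary inequality \(\lfloor x \rfloor \leq x\), applied termwise. First I would use it to obtain the non-strict bound
\[
v_p(n!) = \sum_{i=1}^{\infty} \left\lfloor \frac{n}{p^i} \right\rfloor \leq \sum_{i=1}^{\infty} \frac{n}{p^i} = n \cdot \frac{1/p}{1 - 1/p} = \frac{n}{p-1},
\]
the middle equality being the sum of a convergent geometric series of ratio \(1/p < 1\). To upgrade this to the required strict inequality, the key point is that the floor sum truncates while the geometric series does not: choosing any index \(i_0\) with \(p^{i_0} > n\) gives \(\lfloor n/p^{i_0} \rfloor = 0\) while \(n/p^{i_0} > 0\), so at least one term of the left-hand side is strictly smaller than its counterpart on the right, every other pair of terms satisfying \(\lfloor n/p^i \rfloor \leq n/p^i\). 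Hence the displayed inequality is strict, yielding \(v_p(n!) < n/(p-1)\) and therefore \((n!)_p = p^{v_p(n!)} < p^{n/(p-1)}\), as claimed.

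There is essentially no genuine obstacle in this argument, so the only delicate point worth flagging is the justification of strictness, which I would make airtight via the truncation observation above. As an even cleaner alternative (and a useful sanity check), one can refine Legendre's formula to the identity \(v_p(n!) = (n - s_p(n))/(p-1)\), where \(s_p(n)\) denotes the sum of the base-\(p\) digits of \(n\); since \(s_p(n) \geq 1\) for every \(n \geq 1\), this gives \(v_p(n!) \leq (n-1)/(p-1) < n/(p-1)\) at once, making the strict inequality transparent without any appeal to the tail of the series.
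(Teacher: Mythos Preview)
Your argument is correct. The use of Legendre's formula together with the geometric-series bound is the standard way to establish this inequality, and your handling of the strict inequality---either via the tail term with \(p^{i_0}>n\) or via the digit-sum identity \(v_p(n!)=(n-s_p(n))/(p-1)\)---is clean and complete.

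As for comparison with the paper: there is nothing to compare. The paper does not prove this lemma at all; it merely quotes it as \cite[Lemma~2.1]{linear} and moves on. So your write-up is not an alternative to the paper's proof but rather a self-contained justification where the paper offers none.
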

\begin{definition}
{\rm Given integers \(a, m\geq 2\), a prime \(r\) is said to be a \emph{primitive prime divisor} of \(a^{m}-1\) if \(r\) divides \(a^{m}-1\) and does not divide \(a^{i}-1\) for any \(i<m\).}
\end{definition}
For \(r\) a primitive prime divisor of \(a^{m}-1\), we conclude by Fermat's Little Theorem that \(r\equiv1\pmod{m}\), and therefore \(r>m\).

\begin{lemma}\label{ppd}{\rm \cite[Theorem IX.8.3]{bb}}
For \(a, m\geq2\), there exists a primitive prime divisor of \(a^{m}-1\) except when \((a,m)=(2,6)\), or \(a+1\) is a power of $2$ and \(m=2\).
\end{lemma}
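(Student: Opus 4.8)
The plan is to prove this via cyclotomic polynomials, the standard route to Zsigmondy's theorem. Write \(\Phi_d\) for the \(d\)-th cyclotomic polynomial, so that \(a^m-1=\prod_{d\mid m}\Phi_d(a)\), and recall that \(\Phi_m(a)=\prod_{\zeta}(a-\zeta)\), the product being over the primitive \(m\)-th roots of unity \(\zeta\). First I would translate the problem into the language of multiplicative orders: for a prime \(p\nmid a\), write \(\mathrm{ord}_p(a)\) for the order of \(a\) modulo \(p\), so that \(p\mid a^k-1\) if and only if \(\mathrm{ord}_p(a)\mid k\). Since \(p\mid a^m-1\) forces \(\mathrm{ord}_p(a)\mid m\), a prime \(p\) is a primitive prime divisor of \(a^m-1\) precisely when \(\mathrm{ord}_p(a)=m\), and this happens exactly for the primes \(p\mid\Phi_m(a)\) with \(\mathrm{ord}_p(a)=m\). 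Thus it suffices to produce a prime dividing \(\Phi_m(a)\) whose order modulo it equals \(m\).

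The key classification step is the \emph{order lemma}: if \(p\mid\Phi_m(a)\), then either \(\mathrm{ord}_p(a)=m\), in which case \(p\) is a primitive prime divisor and we are done, or else \(p\mid m\), and in that case \(p\) is the largest prime divisor of \(m\) and \(p^2\nmid\Phi_m(a)\). I would establish this by showing that a prime dividing both \(\Phi_m(a)\) and \(\Phi_k(a)\) for some \(k<m\) must divide \(m\) (using that such a \(p\) gives a repeated root of \(x^m-1\) modulo \(p\), hence divides its discriminant), and then pinning down that such a \(p\) is the largest prime factor of \(m\) and occurs to the first power only. Granting this lemma, \(a^m-1\) fails to have a primitive prime divisor exactly when every prime factor of \(\Phi_m(a)\) divides \(m\), which forces \(\Phi_m(a)\) to be a single prime, namely the largest prime divisor \(p\) of \(m\); equivalently \(\Phi_m(a)=p\leq m\).

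It then remains to show \(\Phi_m(a)>m\) outside the stated exceptions, which I would do with the elementary size estimate \(\Phi_m(a)=\prod_{\zeta}|a-\zeta|\geq(a-1)^{\phi(m)}\), with strict inequality once \(m\geq 3\). Comparing \((a-1)^{\phi(m)}\) with \(m\) and invoking standard lower bounds for \(\phi(m)\) shows \(\Phi_m(a)>m\geq p\) for all but finitely many pairs \((a,m)\), so that a primitive prime divisor exists in all of those cases. For the remaining bounded list of small pairs I would simply compute \(\Phi_m(a)\) directly: the only genuine failures are \(m=2\) with \(\Phi_2(a)=a+1\) a power of \(2\), and \((a,m)=(2,6)\), where \(\Phi_6(2)=3\) is the largest prime divisor of \(6\).

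I expect the main obstacle to be the order lemma together with the exact multiplicity claim \(p^2\nmid\Phi_m(a)\), since this is where the delicate \(p\)-adic bookkeeping lives and where the exceptional pair \((2,6)\) creeps in; the size estimate is comparatively routine once one is willing to dispatch a finite list of small \((a,m)\) by direct calculation, although keeping that list genuinely bounded and correctly isolating the \(m=2\) Mersenne family \(a+1=2^k\) requires some care.
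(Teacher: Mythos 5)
The paper does not prove this lemma at all: it is Zsigmondy's theorem, quoted directly from Blackburn--Huppert (Theorem IX.8.3), so there is no internal proof to compare yours against. Your outline is the classical cyclotomic-polynomial route, and its architecture is the right one: reduce existence of a primitive prime divisor to finding a prime divisor of \(\Phi_m(a)\) of multiplicative order exactly \(m\); show that any other prime divisor of \(\Phi_m(a)\) must be the largest prime factor \(p\) of \(m\) and can occur only to the first power; conclude that failure forces \(\Phi_m(a)=p\leq m\); and defeat this with a size estimate plus a finite check.

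Two repairs are needed, one cosmetic and one substantive. Cosmetically, the multiplicity claim \(p^{2}\nmid\Phi_{m}(a)\) is false for \(m=2\), \(p=2\) (e.g.\ \(\Phi_{2}(7)=8\)); it holds for \(m\geq 3\) (equivalently for \(p\) odd), and although you do quarantine \(m=2\) at the end, the order lemma must carry that restriction or the conclusion ``\(\Phi_m(a)\) is a single prime'' is wrong precisely on the Mersenne-type exceptional family \(a+1=2^{k}\). Substantively, the size estimate \(\Phi_{m}(a)\geq(a-1)^{\phi(m)}\) is vacuous when \(a=2\): it yields only \(\Phi_{m}(2)>1\), so the claimed reduction to ``all but finitely many pairs \((a,m)\)'' leaves the entire infinite family \(a=2\) untouched --- and that is exactly where the genuine exception \((2,6)\) lives. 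To close this you need a lower bound for \(\Phi_{m}(2)\) that grows with \(m\), for instance via the M\"obius product \(\Phi_{m}(2)=\prod_{d\mid m}(2^{d}-1)^{\mu(m/d)}\geq 2^{\phi(m)-2^{\omega(m)-1}}\) (where \(\omega(m)\) is the number of distinct prime factors of \(m\)), or by pairing conjugate roots \(\zeta,\bar\zeta\) and noting \(|2-\zeta|\,|2-\bar\zeta|=5-4\cos\theta\). Either way one gets \(\Phi_{m}(2)>m\) outside an explicit finite list, and the direct check of that list isolates \((2,6)\). As written, the step ``for all but finitely many pairs'' is asserted but not delivered by the inequality you quote.
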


\subsection{Group factorisations}
A factorisation of a group \(G\) is an expression of \(G\) as the product of two subgroups \(A\) and \(B\) of \(G\), where \(A\) and \(B\) are called factors. A proper group factorisation occurs when neither \(A\) nor \(B\) equals \(G\). 

\begin{definition}
{\rm A factorisation \(G=AB\) is called a \emph{homogeneous factorisation} of \(G\) if it is proper and  \(A\) is isomorphic to \(B\).}
\end{definition}

We now give two technical lemmas, which will be useful later.

\begin{lemma}\label{proj}
Suppose that \(G=\langle H, x\rangle\) with \(H\triangleleft G\). Suppose that \(K\leqslant G\) and \(K=AB\) is a homogeneous factorisation such that \(B=A^{t}\) for some \(t\in G\), and let \(\pi:G\to G/H\) denote the natural projection map. Then \(\pi(A)=\pi(B)=\pi(K)\).
\end{lemma}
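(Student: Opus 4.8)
The plan is to reduce everything to the single observation that the quotient \(G/H\) is abelian, after which conjugation disappears under \(\pi\) and the conclusion is purely formal.

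First I would identify the structure of \(G/H\). Since \(H\triangleleft G\) and \(G=\langle H,x\rangle\), the quotient is generated by the image of \(x\); that is, \(G/H=\langle \pi(x)\rangle\), so \(G/H\) is cyclic. The minimality of \(m\) with \(x^{m}\in H\) shows that \(\pi(x)\) has order exactly \(m\), whence \(G/H\cong \C_{m}\). The only feature I actually need from this is that \(G/H\) is \emph{abelian}.

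Next, because \(\pi\) is a homomorphism, \(\pi(A)\) and \(\pi(B)\) are subgroups of \(G/H\), and \(\pi(K)=\pi(AB)=\pi(A)\pi(B)\) as subsets. From \(B=A^{t}\) I would compute \(\pi(B)=\pi(A^{t})=\pi(t)^{-1}\pi(A)\pi(t)\); since \(G/H\) is abelian this conjugate collapses to \(\pi(A)\), giving \(\pi(A)=\pi(B)\). Finally, as \(\pi(A)\) is a subgroup it is closed under multiplication, so \(\pi(K)=\pi(A)\pi(B)=\pi(A)\pi(A)=\pi(A)\), and the three images coincide.

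There is no genuine obstacle here: the entire content is the recognition that \(G/H\) is cyclic and hence abelian, which is what forces the conjugation \(B=A^{t}\) to become trivial after projection. It is worth remarking that the isomorphism \(A\cong B\) built into the definition of a homogeneous factorisation plays no role in this particular conclusion; only the conjugacy relation \(B=A^{t}\) together with the abelianness of \(G/H\) is used. The place to be slightly careful is simply to keep track of the distinction between \(\pi(A)\) as a \emph{subgroup} and \(\pi(A)\pi(B)\) as a product of subsets, so that the step \(\pi(A)\pi(A)=\pi(A)\) is justified by \(\pi(A)\) being a subgroup rather than taken for granted.
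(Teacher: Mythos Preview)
Your proof is correct and follows essentially the same approach as the paper: both rely on the observation that \(G/H\) is cyclic (hence abelian), so that conjugation by \(\pi(t)\) acts trivially on \(\pi(A)\), giving \(\pi(A)=\pi(B)\) and then \(\pi(K)=\pi(A)\pi(B)=\pi(A)\). Your version is in fact slightly more streamlined, applying the abelianness directly at the level of the subgroup \(\pi(A)\) rather than passing through an explicit generator and proving the two inclusions \(\pi(A)\leqslant\pi(B)\) and \(\pi(B)\leqslant\pi(A)\) separately as the paper does.
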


\begin{proof}
Let \(m\) be the smallest positive integer such that \(x^{m}\in H\). Note that \(G/H=\langle Hx\rangle\) has order \(m\) by the minimality of \(m\). Since \(\pi(A)= AH/H\leqslant G/H\) and \(G/H\) is cyclic, we conclude that \(\pi(A)=\langle Hx^{j}\rangle\) for some divisor \(j\) of \(m\). So there exists \(a\in A\) such that \(\pi(a)=Hx^{j}\). Thus \(Ha=Hx^{j}\), so \(a=hx^{j}\) for some \(h\in H\).

As \(B=A^{t}\) with \(t\in G\), we have \(a^{t}\in B\) and \(\pi(a^{t})=\pi(t^{-1})\pi(a)\pi(t)=\pi(a)=Hx^{j}\) since \(G/H\) is abelian. Hence \(\pi(B)\geqslant\pi(A)\). The same argument with \(A\) and \(B\) interchanged and \(t\) replaced by \(t^{-1}\), gives that \(\pi(A)\geqslant\pi(B)\). Hence \(\pi(A)=\pi(B)\), and so \(\pi(K)=\pi(A)\pi(B)=\pi(A)=\pi(B)\).
\end{proof}

\begin{lemma}\label{ps}
Suppose that \(G=AB\) with \(G=\PSL_{2}(8):3\) such that \(A, B\) are proper subgroups of \(G\). Then \(|A|\neq |B|\).
\end{lemma}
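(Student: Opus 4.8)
The plan is to argue by contradiction purely from the order of $G$. Since $G=\PSL_2(8):3$ we have $|G|=504\cdot 3=1512=2^3\cdot 3^3\cdot 7$. Suppose, for a contradiction, that $G=AB$ with $A,B$ proper and $|A|=|B|=:d$. The identity $|G|=|A|\,|B|/|A\cap B|$ gives $d^2=1512\,|A\cap B|$, so $1512$ must divide $d^2$. Writing $d=2^a3^b7^c$ and comparing $p$-parts, the conditions $2a\ge 3$, $2b\ge 3$ and $2c\ge 1$ force $a\ge 2$, $b\ge 2$ and $c\ge 1$, so $252=2^2\cdot 3^2\cdot 7$ divides $d$. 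As $d$ is a proper divisor of $1512$ and $1512/252=6$, the only possibilities are $d\in\{252,504,756\}$; that is, $A$ and $B$ have index $6$, $3$ or $2$ in $G$. This divisibility reduction is forced and does the bulk of the work.

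It then remains to rule out each of these three indices, and here I would invoke the subgroup structure of $G$. The two facts I would rely on are that $\PSL_2(8)$ is a simple normal subgroup of index $3$ in $G$ (its socle), and that every proper subgroup of $\PSL_2(8)$ has index at least $9$ (its maximal subgroups have orders $56$, $18$ and $14$). The index-$2$ case is immediate: a subgroup of index $2$ is normal and would yield a quotient of order $2$, contradicting $G/\PSL_2(8)\cong\C_3$ together with the perfectness of $\PSL_2(8)$. For the index-$3$ case, the action of $G$ on the three cosets gives a homomorphism into $\Sym_3$ whose kernel $K$ satisfies $K\cap\PSL_2(8)\trianglelefteq\PSL_2(8)$; since $\PSL_2(8)$ is simple and does not embed in $\Sym_3$, we get $\PSL_2(8)\le K$ and hence $K=\PSL_2(8)$, forcing the index-$3$ subgroup to be $\PSL_2(8)$ itself. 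Then $A=B=\PSL_2(8)$ and $AB=\PSL_2(8)\neq G$, a contradiction.

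For the remaining index-$6$ case I would intersect with the socle. If $A$ has order $252$, then $|A\cap\PSL_2(8)|=252\cdot 504/|A\,\PSL_2(8)|$, and since $|A\,\PSL_2(8)|$ is a multiple of $504$ dividing $1512$ it equals either $504$ or $1512$. The first option makes $A$ a subgroup of $\PSL_2(8)$ of index $2$, and the second makes $A\cap\PSL_2(8)$ a subgroup of $\PSL_2(8)$ of index $6$; both contradict the fact that $\PSL_2(8)$ has no proper subgroup of index less than $9$. This disposes of all three surviving values of $d$ and completes the argument.

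The main obstacle, such as it is, is just the careful bookkeeping of subgroup indices in $G$ and its socle rather than any conceptual difficulty: once the divisibility constraint pins $d$ down to $\{252,504,756\}$, each case collapses quickly, with the index-$9$ lower bound for proper subgroups of $\PSL_2(8)$ being the one external input I would need to state explicitly.
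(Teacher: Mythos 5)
Your proof is correct and follows essentially the same route as the paper: both arguments start from the observation that $|A|=|B|$ together with $G=AB$ forces $|A|_p^2\ge |G|_p$ for every prime $p$, so that $2^2\cdot 3^2\cdot 7$ divides $|A|$. The only difference is the finishing step: the paper deduces that $A\cap \PSL_{2}(8)$ has order divisible by $2$, $3$ and $7$ and quotes the fact that $\PSL_{2}(8)$ has no proper subgroup with this property to conclude $\PSL_{2}(8)\leqslant A$ (and likewise for $B$), whereas you enumerate the three possible common orders $252$, $504$, $756$ and eliminate each using normality and simplicity considerations together with the fact that $\PSL_{2}(8)$ has no proper subgroup of index less than $9$ --- an equivalent piece of information about the subgroup structure of $\PSL_{2}(8)$, deployed slightly more explicitly.
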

\begin{proof}
Suppose for a contradiction that there exists a factorisation \(G=AB\) with \(G=\PSL_{2}(8):3\) and \(|A|=|B|\). Then we deduce that \(|A|_{p}^{2}=|B|_{p}^{2}\geq |G|_{p}\) for any prime \(p\). In particular, \(|A|_{2}\geq 2^{2}\), \(|A|_{3}\geq 3^{2}\) and \(|A|_{7}=7\). On the other hand, since \(A=A\cap \PSL_{2}(8)\) or \(A=(A\cap \PSL_{2}(8)).3\). We therefore conclude that \(|A\cap \PSL_{2}(8)|\) is divisible by 2, 3 and 7. By [1, Corollary 5 and Table 10.7] there are no proper subgroups of \(\PSL_{2}(8)\) with order divisible by 2, 3 and 7, and hence \(A\cap \PSL_{2}(8)=\PSL_{2}(8)\). Similarly, we conclude that \(B\cap \PSL_{2}(8)=\PSL_{2}(8)\). However, since \(|A|=|B|\), we must have that \(A=B=G\), which contradicts the fact that \(A\) and \(B\) are proper subgroups of \(G\).
\end{proof}

\subsection{Arc-transitivity}
We say that a group \(G\) acts on a digraph \(\Gamma\) if \(G\leq \Aut(\Gamma)\). Here are two results in \cite{linear} and \cite{quasi} that reveal some important properties for an \(s\)-arc-transitive digraph \(\Gamma\) where \(s\geq2\).

\begin{lemma}{\rm \cite[Lemma 2.2]{quasi}\label{0}}
Let \(\Gamma\) be a digraph, and \(v_{0}\rightarrow v_{1}\rightarrow v_{2}\) be a \(2\)-arc of \(\Gamma\). Suppose that \(G\) acts arc-transitively on \(\Gamma\). Then \(G\) acts \(2\)-arc-transitively on \(\Gamma\) if and only if \(G_{v_{1}}=G_{v_{0}v_{1}}G_{v_{1}v_{2}}\). Moreover, there exists some \(t\in G\) such that \((G_{v_{0}v_{1}})^{t}=G_{v_{1}v_{2}}\). 

\end{lemma}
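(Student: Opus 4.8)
The plan is to localise the global $2$-arc-transitivity condition at the central vertex $v_1$ and then translate transitivity of the point stabiliser $G_{v_1}$ into the product condition $G_{v_1}=G_{v_0v_1}G_{v_1v_2}$. Write $\Gamma^{-}(v_1)$ and $\Gamma^{+}(v_1)$ for the sets of in-neighbours and out-neighbours of $v_1$, so that the $2$-arcs with central vertex $v_1$ are precisely the pairs in $\Gamma^{-}(v_1)\times\Gamma^{+}(v_1)$, with $(v_0,v_2)$ corresponding to our chosen $2$-arc. The whole argument rests on the standard fact that a subgroup $A\leqslant H$ acts transitively on the coset space $H/C$ if and only if $H=AC$.

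First I would show that arc-transitivity already forces $G$ to be transitive on the central vertices of $2$-arcs: given any $2$-arc $u_0\to u_1\to u_2$, arc-transitivity supplies $g\in G$ mapping the arc $u_0\to u_1$ to $v_0\to v_1$, whence $u_1^{g}=v_1$. Consequently $G$ is transitive on all $2$-arcs if and only if the stabiliser $G_{v_1}$ is transitive on the $2$-arcs with central vertex $v_1$, that is, on $\Gamma^{-}(v_1)\times\Gamma^{+}(v_1)$. Next, the same arc-transitivity argument restricted to arcs with head (respectively tail) $v_1$ shows that $G_{v_1}$ acts transitively on $\Gamma^{-}(v_1)$ and on $\Gamma^{+}(v_1)$ separately, and that the stabiliser in $G_{v_1}$ of $v_0$ is $G_{v_0v_1}$ while the stabiliser of $v_2$ is $G_{v_1v_2}$.

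Then I would invoke the elementary product-action criterion: since $G_{v_1}$ is transitive on each factor, it is transitive on $\Gamma^{-}(v_1)\times\Gamma^{+}(v_1)$ if and only if the stabiliser $G_{v_0v_1}$ of the first coordinate $v_0$ is transitive on the second factor $\Gamma^{+}(v_1)$. Identifying $\Gamma^{+}(v_1)$ with the coset space $G_{v_1}/G_{v_1v_2}$ via the orbit-stabiliser correspondence, the fact noted above gives that $G_{v_0v_1}$ is transitive on $\Gamma^{+}(v_1)$ exactly when $G_{v_1}=G_{v_0v_1}G_{v_1v_2}$. Chaining these equivalences yields the stated criterion. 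For the final assertion, arc-transitivity directly provides $t\in G$ carrying the arc $v_0\to v_1$ to the arc $v_1\to v_2$; conjugating stabilisers then gives $(G_{v_0v_1})^{t}=G_{v_1v_2}$.

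I do not expect a genuine obstacle here, since this is essentially a bookkeeping lemma; the only points requiring care are the two reductions that rely on arc-transitivity, namely passing from $2$-arcs globally to $2$-arcs centred at $v_1$, and obtaining transitivity of $G_{v_1}$ on the in- and out-neighbourhoods, together with the correct identification of $\Gamma^{+}(v_1)$ with $G_{v_1}/G_{v_1v_2}$ so that the criterion $H=AC$ applies. One should also note that $G_{v_0v_1}=G_{v_0}\cap G_{v_1}$ and $G_{v_1v_2}=G_{v_1}\cap G_{v_2}$ really are the coordinate stabilisers inside $G_{v_1}$, which is immediate once arcs are regarded as ordered pairs.
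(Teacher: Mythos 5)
Your argument is correct, and since the paper simply quotes this result from \cite[Lemma 2.2]{quasi} without reproducing a proof, there is nothing in the source to diverge from; your localisation at $v_1$, the product-action criterion on $\Gamma^{-}(v_1)\times\Gamma^{+}(v_1)$, and the identification of $\Gamma^{+}(v_1)$ with $G_{v_1}/G_{v_1v_2}$ constitute exactly the standard proof given in that reference. The ``moreover'' clause is correctly obtained from arc-transitivity alone via an element $t$ carrying $(v_0,v_1)$ to $(v_1,v_2)$.
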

\begin{lemma}{\rm \cite[Lemma 2.14]{linear}\label{factor}}
Let \(\Gamma\) be a connected \(G\)-arc-transitive digraph with arc \(v\rightarrow w\). Let \(g\in G\) such that \(v^{g}=w\). Then \(g\) normalises no proper nontrivial normal subgroup of \(G_{v}\).
\end{lemma}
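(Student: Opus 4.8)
The plan is to argue by contradiction. Suppose that \(N\) is a proper nontrivial normal subgroup of \(G_{v}\) that is normalised by \(g\), so \(N^{g}=N\) and \(N\neq 1\). The strategy is to show that these two normalising conditions force \(N\) to be normal in the whole of \(G\), and then to deduce \(N=1\) from the faithfulness of the action on \(V\), which contradicts \(N\neq 1\). The crucial ingredient is the standard fact that, for a connected arc-transitive digraph, the ``local data'' \(G_{v}\) together with \(g\) already generate \(G\); everything else is then formal.

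First I would prove that \(G=\langle G_{v}, g\rangle\). Set \(H=\langle G_{v}, g\rangle\) and let \(B=v^{H}\) be the \(H\)-orbit of \(v\). Since \(\Gamma\) is \(G\)-arc-transitive, \(G_{v}\) is transitive both on the set \(\Gamma^{+}(v)\) of out-neighbours of \(v\) and on the set \(\Gamma^{-}(v)\) of in-neighbours of \(v\). Now \(w=v^{g}\in\Gamma^{+}(v)\) and \(v^{g^{-1}}\in\Gamma^{-}(v)\), and both of these lie in \(B\); applying the subgroup \(G_{v}\leqslant H\) and using that \(G_{v}\) is transitive on \(\Gamma^{+}(v)\) and on \(\Gamma^{-}(v)\), I obtain \(\Gamma^{+}(v)\cup\Gamma^{-}(v)\subseteq B\). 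For an arbitrary \(u=v^{h}\in B\) with \(h\in H\) we have \(\Gamma^{+}(u)=\Gamma^{+}(v)^{h}\) and \(\Gamma^{-}(u)=\Gamma^{-}(v)^{h}\), and since \(B^{h}=B\) these sets are again contained in \(B\). Thus \(B\) is closed under passing to out-neighbours and to in-neighbours, so it is a union of connected components of the underlying undirected graph; as \(\Gamma\) is connected this forces \(B=V\). Hence \(H\) is transitive on \(V\), and since \(G_{v}\leqslant H\), for any \(x\in G\) we may write \(v^{x}=v^{h}\) for some \(h\in H\), whence \(xh^{-1}\in G_{v}\leqslant H\) and so \(x\in H\); therefore \(G=H=\langle G_{v}, g\rangle\).

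With \(G=\langle G_{v}, g\rangle\) established the conclusion is immediate. The subgroup \(N\) is normalised by \(G_{v}\) (as \(N\triangleleft G_{v}\)) and by \(g\) (by hypothesis), hence by \(\langle G_{v}, g\rangle=G\); that is, \(N\triangleleft G\). Moreover \(G\) is now known to be transitive on \(V\), and \(G\leqslant\Aut(\Gamma)\) acts faithfully on \(V\), so \(\bigcap_{u\in V}G_{u}=1\). Since \(N\leqslant G_{v}\) is normal in \(G\) we get, for every \(x\in G\), \(N=N^{x}\leqslant G_{v}^{\,x}=G_{v^{x}}\); as \(v^{x}\) runs over all of \(V\) this yields \(N\leqslant\bigcap_{u\in V}G_{u}=1\), so \(N=1\), a contradiction. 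I expect the connectivity step to be the only genuine obstacle: one must treat both in- and out-neighbours (the relevant notion being weak connectivity of the digraph) and correctly extract the transitivity of \(G_{v}\) on each neighbour set from arc-transitivity. Once \(G=\langle G_{v}, g\rangle\) is in place, the normality and faithfulness arguments are routine.
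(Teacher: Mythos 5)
Your proof is correct and is essentially the argument given for this result in the cited source \cite[Lemma 2.14]{linear} (the present paper quotes the lemma without reproving it): one shows \(G=\langle G_{v},g\rangle\) via connectivity and local transitivity, deduces \(N\triangleleft G\), and concludes \(N=1\) from faithfulness of the vertex action. The connectivity step and the handling of both in- and out-neighbours are done carefully, so there is nothing to add.
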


We now set out the following hypothesis that we will use throughout the paper.

\begin{hypothesis}\label{ga}
Let \(\Gamma\) be a vertex-primitive \((G,s)\)-arc-transitive digraph for some \(s\geq 2\), and let \(u\to v\to w\) be a \(2\)-arc and \(g\in G\) such that \((u,v)^{g}=(v,w)\). Then by Lemma \(\ref{0}\), \((G_{uv})^{g}=G_{vw}\) and \(G_{v}=G_{uv}G_{vw}\) is a homogeneous factorisation.

\end{hypothesis}

Note that necessary conditions for a digraph \(\Gamma\) to be \(G\)-vertex-primitive and \((G,2)\)-arc-transitive are that \(G_{v}\) is a maximal core-free subgroup of $G$ and that \(G_{v}\) admits a homogeneous factorisation. Therefore, to disprove the 2-arc-transitivity of \(G\) it suffices for us to show that a maximal core-free subgroup \(G_{v}\) does not have a homogeneous factorisation.

We have the following corollary to Lemma \ref{factor}.

\begin{corollary}\label{Q}
Suppose that Hypothesis \(\ref{ga}\) holds. Then, for each prime \(p\) dividing \(|G_{v}|\), \(G_{v}\) has at least two subgroups of order \(p\). 
\end{corollary}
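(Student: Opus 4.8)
The plan is to argue by contradiction via Lemma~\ref{factor}. Suppose that for some prime \(p\) dividing \(|G_{v}|\) the group \(G_{v}\) has a unique subgroup \(P\) of order \(p\); such a subgroup exists by Cauchy's theorem. Since any automorphism of \(G_{v}\) carries a subgroup of order \(p\) to another such subgroup, uniqueness forces \(P\) to be characteristic, and in particular normal, in \(G_{v}\). Moreover, every subgroup of order \(p\) inside a subgroup \(H\leqslant G_{v}\) is a subgroup of order \(p\) of \(G_{v}\), so \(P\) is the unique subgroup of order \(p\) in each \(H\leqslant G_{v}\) with \(p\mid|H|\). The strategy is to locate \(P\) inside the factor \(G_{uv}\), transport it by \(g\) into \(G_{vw}\), and use uniqueness to conclude that \(g\) fixes \(P\), contradicting Lemma~\ref{factor}.

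First I would show that \(p\) divides \(|G_{uv}|\). By Hypothesis~\ref{ga} we have the (proper) homogeneous factorisation \(G_{v}=G_{uv}G_{vw}\) with \((G_{uv})^{g}=G_{vw}\), so \(|G_{uv}|=|G_{vw}|\). Taking \(p\)-parts in \(|G_{v}|=|G_{uv}|\,|G_{vw}|/|G_{uv}\cap G_{vw}|\) and using \(|G_{uv}\cap G_{vw}|_{p}\geqslant 1\) gives \(|G_{v}|_{p}\leqslant|G_{uv}|_{p}^{2}\). As \(p\mid|G_{v}|\), we obtain \(|G_{uv}|_{p}^{2}\geqslant p\), whence \(|G_{uv}|_{p}\geqslant p\); thus \(p\mid|G_{uv}|\) and equally \(p\mid|G_{vw}|\).

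By the uniqueness observation it then follows that \(P\leqslant G_{uv}\) and \(P\leqslant G_{vw}\). Since \(P\leqslant G_{uv}\), conjugating gives \(P^{g}\leqslant(G_{uv})^{g}=G_{vw}\), and \(P^{g}\) has order \(p\); but \(P\) is the unique subgroup of order \(p\) in \(G_{vw}\), so \(P^{g}=P\), i.e.\ \(g\) normalises \(P\). Finally, \(P\) is nontrivial and, as \(P\leqslant G_{uv}\subsetneq G_{v}\) (the factorisation being proper), it is also a proper subgroup of \(G_{v}\); combined with the normality of \(P\) in \(G_{v}\), this exhibits a proper nontrivial normal subgroup of \(G_{v}\) normalised by \(g\). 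Since \(v\to w\) is an arc with \(v^{g}=w\) (and \(\Gamma\), being vertex-primitive with an arc, is connected), this contradicts Lemma~\ref{factor}, completing the argument.

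The only step requiring genuine care is the verification that \(p\) divides \(|G_{uv}|\): this is precisely what guarantees that \(P\) lies in the factor \(G_{uv}\), so that its conjugate \(P^{g}\) can be pinned down inside \(G_{vw}\). Once this is established, the remaining conclusions are forced by the uniqueness of \(P\), and the contradiction with Lemma~\ref{factor} is immediate.
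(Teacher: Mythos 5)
Your proof is correct and follows essentially the same route as the paper: use the factorisation $|G_v|=|G_{uv}||G_{vw}|/|G_{uv}\cap G_{vw}|$ to force $p$ to divide both factors, locate the unique (hence normal) subgroup of order $p$ in both, and conjugate by $g$ to contradict Lemma~\ref{factor}. The extra details you supply (properness of $P$ in $G_v$ and connectivity of $\Gamma$) are left implicit in the paper but are correct and harmless.
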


\begin{proof}
Suppose for a contradiction that there exists a prime \(p\) such that \(G_{v}\) has a unique subgroup \(Q\) of order \(p\). Then \(Q\) is normal in \(G_{v}\). By Hypothesis \ref{ga}, \(G_{v}=AB\) where \(A=G_{uv}\) and \(B=G_{vw}\), so $|G_v|$ divides $|A|\cdot |B|=|A|^2$ and hence \(|A|_{p}=|B|_{p}\geq p\). Hence both of \(A\) and \(B\) contain the unique subgroup \(Q\) of order \(p\), and as \(A^{g}=B\), we have \(Q^{g}\leqslant B\leqslant G_{v}\) and therefore \(Q^{g}=Q\), contradicting Lemma \(\ref{factor}\).
\end{proof}

\section{The small Ree groups}\label{s:Ree}

Suppose that $\Gamma$ is a $G$-vertex-primitive, $(G,s)$-arc-transitive digraph such that $\Soc(G)=\Ree(q)$ with \(q=3^{2n+1}\), for some \(n\geq 1\) and $s\geq 1$. Note that for \(\Ree(q)\leqslant G\leqslant \Aut(\Ree(q))\), since \(\Aut(\Ree(q))=\Ree(q):(2n+1)\) we have \(G=\Ree(q):m\) for some divisor \(m\) of \(2n+1\).
Since the action of \(G\) on \(\Gamma\) is vertex-primitive, the vertex stabiliser \(G_{v}\) is maximal in $G$ and  does not contain $\Ree(q)$. The following list of the maximal subgroups of an almost simple group of socle \(\Ree(q)\) may be found in \cite[Table 8.43]{holt}.

%\begin{theorem}{\rm \cite[Theorem 4.2]{Wilson}}
%If \(q=3^{2n+1}\) with \(n\geq 1\), then the maximal subgroups of \(^{2}\G_{2}(q)\) are (up to conjugacy):
%\\(i) \([q^{3}]:\C_{q-1}\),
%\\(ii) \(2\times \PSL_{2}(q)\),
%\\(iii) \((2^{2}\times \D_{\frac{q+1}{2}}):3\),
%\\(iv) \(\C_{q-\sqrt{3q}+1}:6\),
%\\(v) \(\C_{q+\sqrt{3q}+1}:6\),
%\\(vi) \(^{2}\G_{2}(q_{0})\), where \(q=q_{0}^{r}\) and \(r\) is prime.
%\end{theorem}

%Since $\Aut({}^{2}\G_{2}(q))={}^{2}\G_{2}(q):(2n+1)$, and \({}^{2}\G_{2}(3^{2n+1})\leq G\leq \Aut({}^{2}\G_{2}(q))\), we have $G={}^{2}\G_{2}(q):m$, for some divisor $m$ of $2n+1$, and a vertex-stabiliser $G_v$  is maximal in $G$ and does not contain ${}^{2}\G_{2}(q)$. 
%The subgroups of $G$ with these properties are the following:
%deduce that the maximal subgroups of the group \(G={}^{2}\G_{2}(3^{2n+1}):m\), where \(m\) divides \(2n+1\), are the following:

\begin{theorem}\label{reemax}
For \(G=\Ree(q):m\), where \(q=3^{2n+1}\) and \(m\) divides \(2n+1\), the maximal subgroups of \(G\) not containing \(\Ree(q)\) are (up to conjugacy):
\\(i) \(([q^{3}]:\C_{q-1}):m\),
\\(ii) \((2\times \PSL_{2}(q)):m\),
\\(iii) \(((2^{2}\times \D_{\frac{q+1}{2}}):3).m\),
\\(iv) \((\C_{q-\sqrt{3q}+1}:6).m\),
\\(v) \((\C_{q+\sqrt{3q}+1}:6).m\),
\\(vi) \(\Ree(q_{0}):m\), where \(q=q_{0}^{r}\) and \(r\) is prime.
\end{theorem}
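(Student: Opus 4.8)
The plan is to classify the maximal subgroups of $G := {}^{2}\G_{2}(q)$ by first assembling all the ``geometric'' candidates from the rank-one Lie structure of $G$ and then proving that no further maximal subgroup exists. I would begin by recording that $|G| = q^{3}(q^{3}+1)(q-1)$ and that $G$ acts $2$-transitively on the $q^{3}+1$ points of the associated Ree unital, with point stabiliser the Borel subgroup $[q^{3}]:\C_{q-1}$; since $G$ has Lie rank one this is the unique conjugacy class of parabolic subgroups, giving candidate (i). Using the factorisation $q^{3}+1 = (q+1)(q-\sqrt{3q}+1)(q+\sqrt{3q}+1)$, where $\sqrt{3q}=3^{n+1}$ is an integer, one reads off the cyclic maximal tori of $G$, of orders $q-1$, $q+1$ and $q\pm\sqrt{3q}+1$. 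Then (iii)--(v) should be identified as the normalisers in $G$ of the tori of orders $q+1$ and $q\pm\sqrt{3q}+1$ (with (iii) the normaliser of a Klein four-subgroup attached to the torus of order $q+1$), while (ii) is the involution centraliser $\C_{G}(t)=2\times\PSL_{2}(q)$ and (vi) consists of the subfield subgroups ${}^{2}\G_{2}(q_{0})$ fixed by powers of the Frobenius map.

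The first task is to verify that each of (i)--(vi) is genuinely maximal and self-normalising. For (iv) and (v) the decisive tool is Lemma~\ref{ppd}: a primitive prime divisor $r$ of $q^{6}-1$ divides $\Phi_{6}(q)=q^{2}-q+1=(q-\sqrt{3q}+1)(q+\sqrt{3q}+1)$, and since the two factors are coprime, $r$ divides exactly one of the Ree-torus orders and no other factor of $|G|$. Hence an element of order $r$ lies in a unique cyclic maximal torus, so any subgroup properly containing the torus normaliser would itself normalise that torus, a contradiction. Analogous order-arithmetic, combined with the known structure of $\C_{G}(t)$ and of the Sylow $2$- and $3$-subgroups (the Sylow $2$-subgroup being elementary abelian of order $8$), settles (ii) and (iii), and the standard field-automorphism argument settles (vi); throughout, the bound $(n!)_{p}<p^{n/(p-1)}$ of Lemma~\ref{Zsigmondy} is available to control $p$-parts when comparing orders.

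The substance of the proof is completeness: showing that an arbitrary maximal subgroup $M$ lies in the list, which I would organise according to $\Soc(M)$. If $M$ is local, say $M=\N_{G}(R)$ for an elementary abelian $r$-subgroup $R$, then the $3$-local case collapses to the Borel (i), because the unipotent radical $[q^{3}]$ is normalised by its full torus only; for $r\neq 3$ the relevant Sylow subgroups are cyclic and lie in a single maximal torus, forcing $\N_{G}(R)$ into one of the torus normalisers (iii)--(v). If instead $M$ is almost simple with $S:=\Soc(M)$, one must show $S\cong\PSL_{2}(q)$ (landing inside (ii)) or $S\cong{}^{2}\G_{2}(q_{0})$ (giving (vi)). Here $|S|$ divides $|G|$, and by Lemma~\ref{ppd} the group $S$ must contain elements realising primitive prime divisors of both $q^{6}-1$ and $q^{3}-1$; matching these divisibility constraints against the orders of the finite simple groups is what pins down the isomorphism type.

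I expect this last case---excluding ``unexpected'' almost simple maximal subgroups---to be the main obstacle, since it is not purely formal: one cannot a priori bound which simple groups embed in $G$ without either invoking the classification of finite simple groups or combining the primitive-prime-divisor constraints with the $2$-transitive action on the $q^{3}+1$ points. The cleanest route I would try is to show that any such $S$ acts on the Ree unital with an orbit structure forcing $q^{3}+1$, or both large Ree-torus primes, to divide $|S|$, which among groups of order dividing $|G|$ is possible only for $S\cong{}^{2}\G_{2}(q_{0})$; the remaining, smaller configurations are then pushed into the involution centraliser and yield $S\cong\PSL_{2}(q)$. This delicate reduction is precisely the part carried out in the original determination, which is why we quote the result rather than reproduce it here.
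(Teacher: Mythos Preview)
The paper does not give a proof of this statement: it is quoted verbatim as a known result from Wilson's book \cite[Theorem 4.2]{Wilson}, and the paper relies on it as background with no argument supplied. Your proposal is therefore not comparable to anything in the paper; you have sketched an outline of the classical determination of the maximal subgroups of ${}^{2}\G_{2}(q)$, and you yourself acknowledge at the end that the delicate almost-simple case is the reason one cites the result rather than proving it. That is exactly what the paper does, so there is nothing further to assess.

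One minor remark on your sketch itself: Lemma~\ref{Zsigmondy} and Lemma~\ref{ppd} in this paper are small elementary facts (a bound on $(n!)_{p}$ and Zsigmondy's theorem) used later for the arc-transitivity arguments, not tools the paper deploys toward the maximal-subgroup classification; invoking them as though the paper uses them here is a misreading of their role.
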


For the rest of this section we assume that $s\geq2$, and hence Hypothesis \ref{ga} holds for \(G=\Ree(q):m\), where $q=3^{2n+1}>3$ and $m$ divides $2n+1$, and we let \(L=\Soc(G)=\Ree(q)\). We consider separately each of the possibilities for the maximal subgroup $G_v$ according to Theorem~\ref{reemax},  and in each case derive a contradiction, hence proving that \(s\leq 1\).

 We let \(\pi\) be the natural projection map \(\pi:G_v\rightarrow G_v/L_v\). Note that since $G=LG_v$ we have $\pi(G_v)\cong G/L\cong C_m$.
We note in particular that, by Hypothesis \ref{ga}, $G_v$ has a homogeneous factorisation 
\begin{equation}\label{e:homf}
\mbox{$G_v=AB$ where $A=G_{uv}$ and $B=G_{vw}$ with $A^g=B$ for some $g\in G$.}
\end{equation}
This implies, first that $\Pi(A)=\Pi(B)=\Pi(G_v)$, and secondly, by Corollary~\ref{Q}, that for each prime $p$ dividing $|G_v|$, $G_v$ has at least two subgroups of order $p$.
We use these facts several times in our arguments.

\begin{lemma}\label{r2}
%   Suppose that Hypothesis \(\ref{ga}\) holds with \(Soc(G)={}^{2}\G_{2}(q)\). Then 
\(G_{v}\) is not a Type (ii) subgroup of \(G\). 
\end{lemma}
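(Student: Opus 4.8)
The plan is to argue by contradiction, assuming $G_v=(2\times\PSL_2(q)):m$ is of Type (ii), and to exploit a single large primitive prime divisor to force an impossible $3$-part in the factor $A$. Write $S=\PSL_2(q)$, which is characteristic in $L_v=2\times S$ and hence normal in $G_v$, with $|G_v|=q(q^2-1)m$ and $|G_v:S|=2m$. From the homogeneous factorisation \eqref{e:homf} I will use only two consequences, both recorded just after \eqref{e:homf}: that $\Pi(A)=\Pi(G_v)$, and that $|A|=|B|$, so that $|G_v|$ divides $|A|\,|B|=|A|^2$.

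First I would locate the key prime. By Lemma \ref{ppd} there is a primitive prime divisor $r$ of $3^{4n+2}-1=q^2-1$ (the Zsigmondy exceptions do not occur since $4n+2\geq 6$). Then $r$ is odd, $r\equiv 1\pmod{4n+2}$ so $r>4n+2\geq 2n+1\geq m$, and $r\mid q+1$ because $r\nmid 3^{2n+1}-1=q-1$. Consequently $r\nmid 2m=|G_v:S|$, so the full $r$-part of $|G_v|$ lies in $S$. Since $r\in\Pi(G_v)=\Pi(A)$ we get $r\mid|A|$, and as $r\nmid|G_v:S|$ every Sylow $r$-subgroup of $A$ lies in $S$; hence $r$ divides $|A\cap S|$.

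The crux is to show that this is incompatible with $A$ having large $3$-part. Using Dickson's classification of the subgroups of $\PSL_2(q)$, I would verify that every subgroup of $S$ of order divisible by $r$ is contained in a conjugate of the non-split torus normaliser $\D_{q+1}$: the small subgroups $\A_4,\Sym_4,\A_5$ are excluded because $r\geq 7$, the subfield subgroups $\PSL_2(q_0)$ because $r$ is primitive of exponent $4n+2$ whereas their orders involve only primes dividing $3^{2(2n+1)/r'}-1$ with $r'\geq 3$, and the Borel-type subgroups because $r\nmid q(q-1)$. Since $q+1\equiv 1\pmod 3$, the group $\D_{q+1}$ has trivial $3$-part, so $A\cap S$ has trivial $3$-part. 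Therefore $|A|_3=|AS/S|_3\leq|G_v:S|_3=m_3$, and the divisibility $|G_v|\mid|A|^2$ forces $3^{2n+1}m_3=|G_v|_3\leq m_3^2$, that is $3^{2n+1}\leq m_3\leq m\leq 2n+1$, which is false for every $n\geq 1$.

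The main obstacle is the structural claim in the last paragraph — that divisibility by the single prime $r$ already confines a subgroup of $\PSL_2(q)$ to a copy of $\D_{q+1}$ — since this is what simultaneously eliminates every other subgroup family and, decisively, annihilates the $3$-part. The remainder is routine bookkeeping with Zsigmondy primes and $3$-parts; in particular this route requires neither Lemma \ref{proj} nor the per-prime count of Corollary \ref{Q}.
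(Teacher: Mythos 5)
Your arithmetic is sound, and your route is in fact very close to the paper's: both arguments ultimately rest on the observation that a \emph{proper} subgroup of \(\PSL_{2}(q)\) whose order is divisible by a primitive prime divisor of \(q^{2}-1\) cannot also have order divisible by \(3\). However, there is one genuine gap. Your structural claim that ``every subgroup of \(S\) of order divisible by \(r\) is contained in a conjugate of \(\D_{q+1}\)'' is false for the subgroup \(S\) itself, and your enumeration of Dickson's families silently omits that case. Consequently the conclusion that \(A\cap S\) has trivial \(3\)-part is only available when \(A\cap S\) is a \emph{proper} subgroup of \(S\); the possibility \(A\cap S=S\) survives your counting entirely (if \(S\leqslant A\) then \(|A|_{3}\geq 3^{2n+1}\) and the inequality \(|G_{v}|_{3}=3^{2n+1}m_{3}\leq |A|_{3}^{2}\) is comfortably satisfied), so the proof as written does not close. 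This is not a vacuous leftover: it is precisely the endpoint at which the paper's proof arrives, and it is eliminated there by a different mechanism --- \(T=\PSL_{2}(q)\) is the unique subgroup of \(G_{v}\) isomorphic to \(\PSL_{2}(q)\), so \(T\leqslant A\) forces \(T^{g}=T\) for the element \(g\) of \eqref{e:homf}, contradicting Lemma \ref{factor}. In particular, contrary to your closing remark, you cannot dispense with all of the conjugation machinery: Lemma \ref{factor} (or an equivalent uniqueness argument) is still needed to dispose of \(A\cap S=S\).

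The repair is short: split into cases. If \(A\cap S=S\), conclude as in the paper via Lemma \ref{factor}; if \(A\cap S<S\), your Dickson/primitive-prime-divisor argument goes through (you should also include the subfield subgroups of type \(\mathrm{PGL}_{2}(q_{0})\) in the list, though the same reasoning excludes them). With that addition the two proofs become essentially the same computation run in opposite directions: the paper bounds \(|A\cap S|_{3}\geq 3^{(n+1)/2}>1\) directly and concludes from the Liebeck--Praeger--Saxl tables that \(A\cap S=S\), then invokes Lemma \ref{factor}; you assume \(A\cap S<S\), deduce \(|A\cap S|_{3}=1\), and reach the same numerical contradiction.
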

\begin{proof}
Suppose to the contrary that \(G_{v}\) is a Type (ii) subgroup of \(G\), and consider the homogeneous factorisation $G_v=AB$ in \eqref{e:homf}, so \(\Pi(A)=\Pi(B)=\Pi(G_{v})\).
Let  \(S\) and \(T\) denote the subgroups of \(L_{v}=L\cap G_{v}\) isomorphic to 2 and \(\PSL_{2}(q)\), respectively. Then \(L_{v}=S\times T\).

Note that, by Lemma \ref{ppd}, there exists 
\(p\in \Pi(G_{v})\) such that \(p\) is a primitive prime divisor of \(3^{2(2n+1)}-1\),  
which is greater than \(2(2n+1)\). Hence \(|A|\) is divisible by \(p\) and, in particular, \(p\) divides the order of \(A_{1}:=A\cap T=A\cap \PSL_{2}(q)\). 
We also notice that
\begin{equation}
\frac{|A||B|}{|A\cap B|}=|AB|=|G_{v}|\nonumber
\end{equation}
and therefore, \(|A|_{3}^{2}\geq |G_{v}|_{3}=|\PSL_{2}(q)|_{3}m_{3}=3^{2n+1}m_{3}\). Thus \(|A|_{3}\geq 3^{\frac{2n+1}{2}}m_{3}^{\frac{1}{2}}\). However, \(m_{3}\leq (2n+1)_{3}\leq 3^{\frac{2n+1}{3}}\leq 3^{n}\), and \(|A|_{3}=|A_{1}|_{3}|\pi(A)|_{3}\leq |A_{1}|_{3}m_{3}\). Hence 
\begin{align*}
    |A_{1}|_{3}&\geq\frac{|A|_{3}}{m_{3}}
    \geq \frac{3^{\frac{2n+1}{2}}m_{3}^{\frac{1}{2}}}{m_{3}}
    = 3^{\frac{2n+1}{2}}m_{3}^{-\frac{1}{2}}\geq 3^{\frac{2n+1}{2}}3^{-\frac{n}{2}}
    =3^{\frac{n+1}{2}}>1.
\end{align*}
\\
Thus \(\{3,p\}\subseteq \Pi(A_{1})\). By \cite[Theorem 4 and Table 10.3]{transitive}, there are no proper subgroups of \(\PSL_{2}(q)\) with order divisible by both 3 and \(p\), and hence \(A_{1}=\PSL_{2}(q)=T\). On the other hand, since \(A^{g}=B\), we have \(T^{g}\leqslant A^{g}=B\). However, \(T\) is the unique subgroup in \(G_{v}\) isomorphic to \(\PSL_{2}(q)\), so this implies that \(T^{g}=T\), which is a contradiction to Lemma \ref{factor}.
\end{proof}

\begin{lemma}\label{r3}
 %Suppose that Hypothesis \(\ref{ga}\) holds with \(\Soc(G)={}^{2}\G_{2}(q)\). Then 
 \(G_{v}\) is not a Type (iii) subgroup of \(G\).
\end{lemma}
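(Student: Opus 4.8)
The plan is to produce a single prime $p$ dividing $|G_v|$ for which $G_v$ has a unique subgroup of order $p$, thereby contradicting Corollary~\ref{Q}. Here $G_v = ((2^{2}\times \D_{\frac{q+1}{2}}):3).m$ has order $6(q+1)m$, so every prime divisor of $q+1$ divides $|G_v|$. Since $q=3^{2n+1}$ with $n\geq 1$, we have $3^{2(2n+1)}-1=(q-1)(q+1)$ with $2(2n+1)\geq 6$, so Lemma~\ref{ppd} (none of whose exceptional cases arise, as $a=3$ gives $a+1=4$ but $m=2(2n+1)\neq 2$) supplies a primitive prime divisor $p$ of $3^{2(2n+1)}-1$. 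As $p\nmid 3^{2n+1}-1=q-1$, it must divide $q+1$; and by the remark following the definition of primitive prime divisors, $p\equiv 1\pmod{2(2n+1)}$, whence $p>2(2n+1)\geq 2n+1\geq m$. In particular $p\neq 2,3$ and $p\nmid m$.

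Next I would pin down the Sylow $p$-subgroup of $G_v$. Because $p$ is odd and divides neither $6$ nor $m$, we have $|G_v|_p=(q+1)_p$, and a Sylow $p$-subgroup $P$ of $G_v$ lies inside the cyclic rotation subgroup of $\D_{\frac{q+1}{2}}$; in particular $P$ is cyclic. It then remains to show $P\trianglelefteq G_v$, for which I would run the standard characteristic-chain argument along the normal series $1\trianglelefteq \D_{\frac{q+1}{2}}\trianglelefteq 2^{2}\times \D_{\frac{q+1}{2}}\trianglelefteq L_v\trianglelefteq G_v$. For the odd prime $p$, $P$ is the unique Sylow $p$-subgroup of the dihedral group, hence characteristic in it; as $p\nmid 4$, $P$ is then the unique, and so characteristic, Sylow $p$-subgroup of $2^{2}\times \D_{\frac{q+1}{2}}$; normality of this direct product in $L_v$ upgrades $P$ to a normal, hence characteristic, Sylow $p$-subgroup of $L_v$; and finally $L_v\trianglelefteq G_v$ gives $P\trianglelefteq G_v$.

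With $P$ a normal cyclic Sylow $p$-subgroup of $G_v$, every element of order $p$ lies in $P$, and the cyclic group $P$ contains exactly one subgroup of order $p$. Hence $G_v$ has a unique subgroup of order $p$, contradicting Corollary~\ref{Q}, and the assumption that $G_v$ is of Type (iii) is untenable.

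The main obstacle is the bookkeeping guaranteeing that $P$ really is the \emph{full} Sylow $p$-subgroup of $G_v$ and that it is normal all the way up: one must confirm $p\nmid m$ (so the field-automorphism part $.m$ contributes nothing to $|G_v|_p$) and $p\nmid 6$ (so the order-$3$ extension and the involutions contribute nothing), so that the characteristic Sylow subgroup of the dihedral factor is neither enlarged nor displaced as one climbs the normal series. Once $p$ is chosen via the primitive-prime-divisor bound $p>2(2n+1)$, these divisibility checks are immediate, and the characteristic-chain step then carries $P$ from the dihedral factor up to $G_v$ without difficulty.
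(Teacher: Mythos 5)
Your proof is correct and takes essentially the same route as the paper's: both select a primitive prime divisor \(p\) of \(3^{2(2n+1)}-1\) (so \(p\mid q+1\), \(p\) odd, \(p>2(2n+1)\geq 2m\)) and deduce that \(G_{v}\) has a unique subgroup of order \(p\), contradicting Corollary~\ref{Q}. The only cosmetic difference is that you establish uniqueness by running a characteristic chain to make the cyclic Sylow \(p\)-subgroup normal in \(G_{v}\), whereas the paper argues directly that any subgroup of order \(p\) must lie in the dihedral factor \(\D_{\frac{q+1}{2}}\), which contains only one such subgroup.
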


\begin{proof}
 Suppose for a contradiction that \(G_{v}\) is a Type (iii) subgroup of \(G\), and again consider the homogeneous factorisation $G_v=AB$ in \eqref{e:homf} which implies that, for each $p$ dividing $|G_v|$, $G_v$ has more than one subgroup of order $p$.  We denote by \(S\) and \(T\) the normal subgroups of \(L_{v}=L\cap G_{v}\) isomorphic to \(2^{2}\) and \(\D_{\frac{q+1}{2}}\), respectively, so that \(L_{v}=(S\times T):3\). 
 
 By Lemma \ref{ppd} there exists a primitive prime divisor \(p\) of \(3^{2(2n+1)}-1=q^{2}-1\). Note that $p\ne 3$, and also \(p\)  divides \(q+1\),  and  \(p\) is  odd (as $p$ does not divide \(q-1\)). Hence \(p\in \Pi(\D_{\frac{q+1}{2}})\subseteq \Pi(G_{v})\). Since \(p>2(2n+1)\geq2m\) and  \(p\neq 3\), any subgroup \(Q\) of $G_v$ of order \(p\) must lie in \(T\). Since \(T=\D_{\frac{q+1}{2}}\) is dihedral, this implies that \(Q\) is the unique subgroup of order \(p\) in \(T\) and hence in \(G_{v}\). However, this contradicts Corollary \ref{Q} and therefore the result follows.
\end{proof}

\begin{lemma}\label{r4}
%Suppose that Hypothesis \(\ref{ga}\) holds with \(\Soc(G)={}^{2}\G_{2}(q)\). Then 
\(G_{v}\) is neither a Type (iv)  subgroup nor  a Type (v) subgroup of \(G\).
\end{lemma}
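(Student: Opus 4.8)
The plan is to assume, for a contradiction, that $G_v$ is a Type (iv) or Type (v) subgroup, so that $L_v=\C_k{:}6$ and $G_v=L_v.m$ has order $6km$, where $k=q\mp\sqrt{3q}+1=3^{2n+1}\mp 3^{n+1}+1$ (recall $\sqrt{3q}=3^{n+1}$). First I would record the basic arithmetic of $k$: it is odd and congruent to $1$ modulo $3$, hence coprime to $6$; moreover the two torus orders satisfy $(q-\sqrt{3q}+1)(q+\sqrt{3q}+1)=q^{2}-q+1$ and are coprime to one another (their difference is $2\cdot 3^{n+1}$, while both are odd and prime to $3$). Since $\C_k$ is a normal Hall subgroup of $L_v$ (its order being coprime to $6$), it is characteristic in $L_v$, and as $L_v=L\cap G_v\trianglelefteq G_v$ we obtain $\C_k\trianglelefteq G_v$.

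The strategy is then to invoke Corollary~\ref{Q} applied to the homogeneous factorisation \eqref{e:homf}: it suffices to exhibit a single prime $p$ dividing $k$ with $p\nmid 6m$ and to show that $G_v$ has then a unique subgroup of order $p$. Indeed, for such a $p$ the Sylow $p$-subgroup of $G_v$ has the same order as that of $\C_k$ (as $p\nmid 6m=|G_v|/k$), so it equals the $p$-part of $\C_k$; this is characteristic in the cyclic group $\C_k$ and hence normal in $G_v$, so it is the unique Sylow $p$-subgroup and, being cyclic, contains a unique subgroup of order $p$. This contradicts Corollary~\ref{Q}, which guarantees at least two subgroups of order $p$.

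Everything thus reduces to the number-theoretic existence of a prime $p\mid k$ with $p\nmid 6m$, and this is the step I expect to be the main obstacle. For one of the two tori it follows from Zsigmondy's theorem (Lemma~\ref{ppd}): since $6(2n+1)\ge 18$ is not an exceptional exponent, $3^{6(2n+1)}-1$ has a primitive prime divisor $r$, which satisfies $r\equiv 1\pmod{6(2n+1)}$ (so $r>6(2n+1)\ge 6m$, whence $r\nmid 6m$) and has multiplicative order $6$ with respect to $q=3^{2n+1}$, so that $r\mid q^{2}-q+1=(q-\sqrt{3q}+1)(q+\sqrt{3q}+1)$. By the coprimality above, $r$ divides exactly one of the two factors and therefore settles whichever of Type (iv), (v) it corresponds to. The genuine difficulty is the remaining torus, since a single Zsigmondy prime need not lie in the factor one wants. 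Here I would argue directly: every prime dividing $k$ divides $q^{2}-q+1$ and is coprime to $6$, hence is at least $7$; if all of them also divided $m$ (equivalently divided $2n+1$, as $m\mid 2n+1$), then $k$ would be composed solely of primes dividing $2n+1$, forcing $k$ to be a prime power of a prime at most $2n+1$. Ruling this out for $k\approx 3^{2n+1}$ — i.e.\ showing that $q\mp\sqrt{3q}+1$ cannot be a prime power of a divisor of $2n+1$ — is the crux; I would settle it by an elementary size estimate together with a direct check of the finitely many small values of $n$ that the estimate leaves open.
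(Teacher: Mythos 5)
Your reduction is sound up to the final number-theoretic step, but that step is a genuine gap --- and it is precisely the point at which the paper takes a different route. You need a prime $p$ dividing $k=q\mp\sqrt{3q}+1$ with $p\nmid 6m$. Zsigmondy gives a primitive prime divisor $r$ of $3^{6(2n+1)}-1$, and your argument that $r\mid q^{2}-q+1$ with $r>6m$ is correct, but $r$ lands in one of the two torus orders with no control over which, so at least one of Types (iv) and (v) is left unresolved. Your fallback for the remaining torus does not work as stated. First, if every prime divisor of $k$ divides $2n+1$, then $k$ is a product of powers of possibly \emph{several} primes dividing $2n+1$; the claim that $k$ would be a prime power is unjustified. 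Second, and more seriously, no ``elementary size estimate'' finishes the job, because nothing elementary bounds the multiplicities: a priori $k\approx 3^{2n+1}$ could be, say, $7^{a}$ with $7\mid 2n+1$ and $a$ large, and the crude bound $p^{v_p(k)}\le k$ is vacuous. Closing this requires the nontrivial fact that a prime dividing a cyclotomic value $\Phi_{6d}(3)$ without being a primitive prime divisor of $3^{6d}-1$ must divide $6d$ and divides $\Phi_{6d}(3)$ exactly once; that is a citable result, but it is not in the paper and you have not supplied it, so as written the argument is incomplete.

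The paper sidesteps the issue entirely by working inside $L_v$ rather than $G_v$. It applies Lemma~\ref{proj} to get $\pi(A)=\pi(B)=\C_m$, hence $|A\cap L_v|=|B\cap L_v|$, and then shows for \emph{every} prime $p$ dividing $|S|=k$ (whether or not $p$ divides $m$) that $p\nmid|A\cap L_v|$: the subgroup $Q_p$ of order $p$ is unique in $L_v$ because $\gcd(k,6)=1$, so if $Q_p\le A\cap L_v$ then also $Q_p\le B\cap L_v$ and $Q_p^g=Q_p$, contradicting Lemma~\ref{factor}. This yields $|A\cap L_v|\le 6$, hence $|A|\le 6m$, and a short order count ($|A|\cdot|B|\le 36m^{2}<|G_v|\cdot|A\cap B|$ for $n\ge 2$, with $n=1$ checked directly) completes the proof. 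Using uniqueness of $Q_p$ in $L_v$ rather than in $G_v$ is exactly what removes the need for a prime coprime to $m$, which is the obstacle your proposal cannot clear.
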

\begin{proof}
Suppose for a contradiction that \(G_{v}\) is a Type (iv) or (v) subgroup of \(G\). Recall, as discussed above, that for each prime $p$ dividing $|G_v|$, $G_v$ has more than one subgroup of order $p$.  
We denote by \(S\) and \( T\) the (unique) cyclic subgroups of \(L_{v}=L\cap G_{v}\) of order \(q\pm\sqrt{3q}+1\) and 6, respectively, so that \(L_{v}=S:T\). Since \(q\pm\sqrt{3q}+1\) is not divisible by 2 or 3, we see that \(|S|\) and \(|T|\) are coprime.

By Lemma \ref{proj} we have that \(\pi(A)=\pi(B)=\pi(G_{v})=\C_{m}\). Thus \(|A\cap L_{v}|=|B\cap L_{v}|\). Let \(p\) be a prime dividing \(|S|\). Then there is a unique subgroup \(Q_{p}\leqslant S\) of order \(p\). We note that since \(|S|\) and \(|T|\) are coprime, \(Q_{p}\) is the unique subgroup in \(L_{v}\) of order \(p\).
\begin{claim}\label{n}
\(|A\cap L_{v}|_{p}=1\).
\end{claim}
Suppose for a contradiction that \(|A\cap L_{v}|_{p}\geq p\). Then \(A\cap L_{p}\) has a subgroup of order \(p\). This subgroup must be \(Q_{p}\) as it is the unique subgroup of order \(p\) in \(L_{v}\). On the other hand, since \(|A\cap L_{v}|=|B\cap L_{v}|\), we find that \(Q_{p}\leqslant B\cap L_{v}\) as well. We note that \(A^{g}=B\), so \(Q_{p}^{g}\leqslant (A\cap L_{v})^{g}=B\cap L_{w}\leqslant B\cap L\leqslant G_{v}\cap L= L_{v}\). This implies that \(Q_{p}^{g}=Q_{p}\). However, this contradicts Lemma \ref{factor} and therefore Claim \ref{n} holds.
\\
\\By Claim \ref{n} we conclude that \(|A\cap L_{v}|\leq 6\). This implies that \(|A|=|A\cap L_{v}||\pi(A)|\leq 6m\). Suppose first that \(n=1\). Then \(m\leq3\), \(q=3^{3}\) and  \(|A|\leq 18\). Thus \(|G_{v}|\) is either divisible by \(37=q+\sqrt{3q}+1\) or by \(19=q-\sqrt{3q}+1\). However, \(|A|\) is divisible by neither  37 nor 19 since \(|A|\leq 18\). Hence \(G_{v}\) does not have a homogeneous factorisation when \(n=1\). Thus  \(n\geq 2\) and so
\begin{equation}
    q-\sqrt{3q}+1=3^{2n+1}-3^{n+1}+1\geq 9(2n+1)\nonumber.
\end{equation}
Since \(G_{v}=AB\), we have that \(|A|\cdot|B|=|G_{v}|\cdot|A\cap B|\). However,
\begin{align*}
|G_{v}|\cdot|A\cap B|&\geq(q\pm\sqrt{3q}+1)\cdot 6m\geq 9(2n+1)\cdot 6m>(6m)^{2}\geq |A|\cdot|B|.
\end{align*}
So we have a contradiction and the result follows.
\end{proof}

\begin{lemma}\label{r6}
 %Suppose that Hypothesis \(\ref{ga}\) holds with \(\Soc(G)={}^{2}\G_{2}(q)\). Then 
 \(G_{v}\) is not a Type (vi) subgroup of \(G\).
 \end{lemma}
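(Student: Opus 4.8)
The plan is to prove Lemma~\ref{r6} by recursion on the field size using the subfield structure, together with the homogeneous factorisation constraint. Here $G_v$ is a Type (vi) subgroup, so $G_v = {}^{2}\G_2(q_0):m$ where $q = q_0^r$ for some prime $r$ and $q_0 = 3^{2n_0+1}$.

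\begin{proof}
Suppose for a contradiction that $G_v$ is a Type (vi) subgroup, so $G_v = {}^{2}\G_2(q_0):m$ with $q = q_0^r$ and $r$ prime. First I would observe that $q_0 = 3^{2n_0+1} > 3$ as well, since $q = q_0^r = 3^{(2n_0+1)r}$ and $(2n_0+1)r = 2n+1$ is odd, forcing $r$ odd and $2n_0+1 \geq 3$; thus $L_v := L \cap G_v = {}^{2}\G_2(q_0)$ is itself a small Ree group of the type under consideration. By Hypothesis~\ref{ga}, $G_v = AB$ is a homogeneous factorisation with $A^g = B$, and I would apply Lemma~\ref{proj} (with $H = L_v$, noting $G_v = L_v G_v$ so $\pi(G_v) \cong \C_m$) to get $\pi(A) = \pi(B) = \pi(G_v) = \C_m$, whence $|A \cap L_v| = |B \cap L_v|$.

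The key step is to produce an intersection $A_1 := A \cap L_v$, $B_1 := B \cap L_v$ with $|A_1| = |B_1|$ that forces a factorisation of the simple group $L_v = {}^{2}\G_2(q_0)$. The order-counting argument from Lemma~\ref{r2} applies: from $|A||B| = |G_v||A \cap B|$ we get $|A|^2 \geq |G_v|$ for the relevant prime parts, and stripping off the factor $\C_m$ via $|A| = |A_1|\,|\pi(A)| \leq |A_1| m$ yields lower bounds on $|A_1|_p$ for each prime $p \in \Pi(L_v)$. In particular, invoking Lemma~\ref{ppd} to select a primitive prime divisor $p$ of $q_0^6 - 1 = 3^{6(2n_0+1)} - 1$ (so $p > 6(2n_0+1) \geq 2m$, $p \neq 3$, and $p$ divides $|{}^{2}\G_2(q_0)|$), I would show that $p$ divides $|A_1|$ and $|B_1|$. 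The aim is to conclude that $L_v = {}^{2}\G_2(q_0) = A_1 B_1$ is a proper factorisation of the simple Ree group.

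The main obstacle, and the crux of the argument, is ruling out this factorisation of ${}^{2}\G_2(q_0)$. I expect to appeal to the classification of maximal factorisations of almost simple exceptional groups: it is known that the simple Ree groups ${}^{2}\G_2(q_0)$ admit no proper factorisation $L_v = A_1 B_1$ into two proper subgroups (this follows from the work of Liebeck--Praeger--Saxl on maximal factorisations, since ${}^{2}\G_2(q_0)$ does not appear in their tables). Consequently one of $A_1$, $B_1$ equals $L_v$; say $A_1 = L_v$, so $L_v = {}^{2}\G_2(q_0) \leq A$. Then $L_v^g \leq A^g = B \leq G_v$, and since $L_v = \Soc(G_v)$ is the unique subgroup of $G_v$ isomorphic to ${}^{2}\G_2(q_0)$ (being its socle), we get $L_v^g = L_v$. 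As $L_v \triangleleft G_v$ is a proper nontrivial normal subgroup normalised by the element $g$ carrying the arc $u \to v$ to $v \to w$, this contradicts Lemma~\ref{factor}. If instead the factorisation machinery is not directly quotable, the fallback is to rule out, one prime at a time, that $A_1$ or $B_1$ can be a proper subgroup carrying the full primitive-prime-divisor content, using the maximal subgroup list of ${}^{2}\G_2(q_0)$ recursively; but the clean route is the factorisation theorem, which I expect to be the decisive and hardest-to-justify ingredient.
\end{proof}
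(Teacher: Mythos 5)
There is a genuine gap, and it sits exactly where the paper has to work hardest. Your opening claim that $q_0>3$ is false: from $q=q_0^r$ with $r$ prime dividing $2n+1$ you correctly get $r$ odd, but nothing forces $2n_0+1\geq 3$. When $2n+1$ is itself prime, the only choice is $r=2n+1$, giving $q_0=3^{(2n+1)/r}=3$. This is not a degenerate case you can wave away: ${}^{2}\G_{2}(3)\cong \PSL_{2}(8){:}3$ is \emph{not simple}, so it is not ``a small Ree group of the type under consideration,'' the maximal subgroup list (which assumes $n\geq1$) does not apply to it, and the Liebeck--Praeger--Saxl factorisation theorems (stated for almost simple groups with simple exceptional socle) say nothing about it. The paper splits into two cases precisely along this line: for $q_0>3$ it runs essentially your argument (via the quotient $G_v/C$ with $C=\C_{G_v}({}^{2}\G_{2}(q_0))$, quoting the non-existence of core-free factorisations and then contradicting Lemma~\ref{factor}), but for $q_0=3$ it needs a substantial separate analysis: Lemma~\ref{ps} handles $m=1$, and for $m=2n+1$ a prime-by-prime examination of $X\cap H'$ inside $H'\cong\PSL_{2}(8)$ (using the Atlas and Lemma~\ref{Zsigmondy}-style $p$-part bounds) pins down $X\cap H'\cong[2^3]{:}7$, forces $m=3$, and finally invokes the classification of maximal factorisations of $\PSL_{2}(8){:}3$ to finish. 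None of this is covered by your proposal.

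A secondary soft spot: even for $q_0>3$, your route from $G_v=AB$ to a factorisation $L_v=A_1B_1$ of the simple group, with $A_1=A\cap L_v$ and $B_1=B\cap L_v$, is not justified --- intersecting a factorisation with a normal subgroup does not in general yield a factorisation of that subgroup, and your order-counting sketch does not close this. The paper avoids the issue by passing to $G_v/C=(AC/C)(BC/C)$, which \emph{is} automatically a factorisation of a group sandwiched between ${}^{2}\G_{2}(q_0)$ and its automorphism group, and only then applying the classification. You should adopt that device, and you must add the entire $q_0=3$ case.
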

\begin{proof} Suppose for a contradiction that \(G_{v}\) is a Type (vi) subgroup of \(G\), so \( G_{v}=H.m\), where \(H=L_{v}=L\cap G_{v}=\Ree(q_{0})\), with $3^{2n+1}=q=q_0^r$ for some prime $r$, such that $r, m$ both divide $2n+1$. Now \(G_{v}\) has a homogeneous factorisation \(G_{v}=AB\) where \(A=G_{uv}\) and \(B=G_{vw}\) with \(B=A^{g}\) for some \(g\in G\). Let \(X:=A\cap L_{v}\) and \(Y:=B\cap L_{v}\). It follows from Lemma \ref{proj} that \(\pi(A)=\pi(B)=\C_{m}\). 
We divide the analysis into two cases:

\medskip\noindent
\emph{Case $1$: \(2n+1\) is not prime.}\quad In this case $q_0=3^{(2n+1)/r} > 3$. Let \(C\) be the centraliser of \(H=\Ree(q_{0})\) in \(G_{v}\). Then  \(\Aut(H)\geq G_{v}/C=(AB)/C=(AC/C)(BC/C)\geq HC/C\cong \Ree(q_{0})\). All the core-free factorisations of an almost simple group  with socle an exceptional group of Lie type are given in [16, Theorem B], and it follows that \(G_v/C\) does not have a core-free factorisation since \(q_{0}>3\). Hence \(\Ree(q_{0})\) is contained in one of \(AC/C\) or \(BC/C\). Without loss of generality, we may assume that \(\Ree(q_{0})\leq AC/C\). This together with the fact that \(H\cap C=1\) implies that \(H\leqslant A\). On the other hand, \(H^{g}\leqslant A^g=B\), and  since \(H\) is the unique subgroup of \(G_{v}\) isomorphic to \(\Ree(q_{0})\), we conclude that \(H^{g}=H\),  which contradicts Lemma \ref{factor}.
    
\medskip\noindent
\emph{Case $2$: \(2n+1\) is prime.}\quad  In this case,  \(m\in\{1,2n+1\}\) and $r=2n+1$, so \(q_{0}=3\) and \(H=L_{v}= H':3\) with \(H'\cong \PSL_{2}(8)\). If \(m=1\), then  \(AB\) is a homogeneous factorisation for \(H=\Ree(3)=\PSL_{2}(8):3\), but no such factorisation exists by Lemma \ref{ps}. Hence \(m=2n+1\) and we have $\Pi(A)=\Pi(G_v)=\Pi(H)\cup\{m\}=\{2,3,7,m\}$  (with possibly $m\in\{3,7\}$). 

Recall that $X=A\cap H=A\cap L_v$. Suppose  that \(\Pi(X\cap H')=\{2,3,7\}\).
By \cite{Atlas} there are no proper subgroups of \(\PSL_{2}(8)\) with order divisible by 2, 3 and 7, and hence  \(X\cap H'=H'\cong \PSL_{2}(8)\) so \(H'\leqslant A\). It follows that \((H')^{g}\leqslant A^{g}=B\), and since \(H'\) is the unique subgroup of \(G_{v}\) isomorphic to \(\PSL_{2}(8)\), we conclude that \((H')^{g}=H'\),   contradicting Lemma \ref{factor}. Thus  \(\Pi(X\cap H')\) is a proper subset of \(\{2,3,7\}\).

Further, since $|G_v:H'|=3m$ is odd, we have $|X\cap H'|_2=|A|_2\geq |G_v|_2^{1/2}=2^{3/2}$, and hence $|X\cap H'|_2\geq 2^2$,  so $2\in \Pi(X\cap H')$. If \(\Pi(X\cap H')=\{2\}\) then, since $|A|/|X|$ divides $3m$, it follows that $\Pi(A)\subseteq\{2,3,m\}$ and since $\Pi(A)=\{2,3,7,m\}$ we conclude that $m=7$ and $3= |A|_3 < 3^{3/2}=|G_v|_3^{1/2}$, which is a contradiction. Therefore \(\Pi(X\cap H')= \{2,p\}\)  for some \(p\in\{3,7\}\). 
If $p=3$ then $X\cap H'$ is a subgroup of $H'=L_2(8)$ of order divisible by  12  and dividing $72$. However there are no such subgroups, see for example \cite[p. 6]{Atlas}.
Therefore  \(\Pi(X\cap H')= \{2,7\}\), and $X\cap H'$ is a subgroup of $H'=L_2(8)$ of order divisible by 28.  It follows from \cite[p. 6]{Atlas} that $X\cap H'= [2^3]:7$ since this group has no subgroups of index 2.   The same   argument gives $Y\cap H'\cong [2^3]:7$. 

If the prime \(m\ne 3\), then $|X|_3=|A|_3\geq |G_v|_3^{1/2} = 3^{3/2}$ so that 
\(|X\cap H'|_{3}\geq|X|_{3}/3\geq 3\), which is a contradiction. 
Hence $m=3$.
In this case, 
\(G_{v}=L_{v}\times \langle z\rangle\cong (\PSL_{2}(8):3)\times 3\), where \(z\) is a field automorphism of order 3. Since \(AB=G_{v}=L_{v}\times\langle z\rangle\), we have that \(\PSL_{2}(8):3=L_{v}\cong G_{v}/\langle z\rangle=((A\langle z\rangle/\langle z\rangle)(B\langle z\rangle/\langle z\rangle)\). Now $A\langle z\rangle/\langle z\rangle$ has a normal subgroup $(X\cap H')\langle z\rangle/\langle z\rangle\cong [2^3]:7$, and similarly
$B\langle z\rangle/\langle z\rangle$ has a normal subgroup $[2^3]:7$. However, by \cite[Theorem A]{j}, there are no such factorisations of \(\PSL_{2}(8):3\). This completes the proof.
 \end{proof}
 
\begin{theorem}\label{ree}
Suppose that \(\Gamma\) is a \(G\)-vertex-primitive \((G,s)\)-arc-transitive digraph such that \(\Soc(G)=\Ree(q)\) with \(q=3^{2n+1}\), for some \(n\geq 1\). Then \(s\leq 1\).
\end{theorem}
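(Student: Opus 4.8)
The plan is to argue by contradiction: suppose \(s\geq 2\), so that Hypothesis~\ref{ga} applies and \(G_v\) is a maximal core-free subgroup of \(G\) carrying the homogeneous factorisation \eqref{e:homf}. By Corollary~\ref{reemax}, \(G_v\) is one of the six types (i)--(vi), and Lemmas~\ref{r2}, \ref{r3}, \ref{r4} and \ref{r6} already exclude types (ii)--(vi). Hence the entire theorem reduces to eliminating the parabolic case (i), where \(G_v=([q^{3}]:\C_{q-1}):m\). I expect this case to be the main obstacle: here \(G_v\) is soluble, so the uniqueness-of-small-subgroups mechanism behind Corollary~\ref{Q} no longer bites. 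Indeed, a primitive prime divisor \(p\) of \(q-1\) lies in the cyclic torus, but its Sylow \(p\)-subgroup is \emph{not} normal in \(G_v\) (the unipotent radical does not centralise it), so \(G_v\) has several subgroups of order \(p\) and Corollary~\ref{Q} is satisfied. A genuinely different argument is therefore needed.

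For type (i), write \(U=[q^{3}]\) for the unipotent radical, a normal \(3\)-subgroup of \(G_v\), and set \(X:=A\cap L_v\), \(Y:=B\cap L_v\). First I would record that \(B=A^{g}\) forces \(A\cong B\), so \(|A|_p=|B|_p\) for every prime \(p\); likewise \(Y=X^{g}\), so \(X\cong Y\). Since \(U\) is the Sylow \(3\)-subgroup of \(L_v=U:\C_{q-1}\), the images \(XU/U\) and \(YU/U\) lie in the cyclic torus \(\C_{q-1}\) and have the common order \(|X|_{3'}=|Y|_{3'}\), hence coincide; projecting the factorisation \(G_v=AB\) onto \(\C_{q-1}\) (clean when \(m=1\)) then identifies this common image with the whole torus. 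Thus \(A\) contains a full maximal torus \(\C_{q-1}\) (via Schur--Zassenhaus), and consequently \(A\cap U\) is invariant under that torus. An order count from \(|A|\,|B|=|G_v|\,|A\cap B|\) gives \(|A\cap U|\geq |U|^{1/2}/m_3^{1/2}\), which exceeds \(|Z(U)|=q\); the \(3\)-part bookkeeping here is exactly as in Lemma~\ref{r2}, using \(m_3\leq (2n+1)_3\leq 3^{n}\).

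The decisive step is to pin down the torus-invariant subgroups of \(U\). I would invoke the structural fact that the central series \(1<Z(U)<Z_2(U)<U\) has factors of order \(q\), on each of which \(\C_{q-1}\) acts irreducibly and by three distinct characters; consequently the only \(\C_{q-1}\)-invariant subgroups of \(U\) are \(1,\,Z(U),\,Z_2(U),\,U\), all characteristic in \(G_v\). Since \(A\cap U\) is invariant under the maximal torus inside \(A\) and has order exceeding \(q\), it must equal \(Z_2(U)\) or \(U\); the same holds for \(B\cap U\), and as \(|A\cap U|=|B\cap U|\) we obtain \(N:=A\cap U=B\cap U\), a proper nontrivial characteristic (hence normal) subgroup of \(G_v\). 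Finally \(N\leq A\) gives \(N^{g}\leq A^{g}=B\leq G_v\), and since \(N^{g}\) is a \(3\)-group it lies in a Sylow \(3\)-subgroup of \(G_v\); as \(U\) is the unique (normal) Sylow \(3\)-subgroup, \(N^{g}\leq B\cap U=N\), whence \(N^{g}=N\), contradicting Lemma~\ref{factor}. This contradiction eliminates type (i) and completes the proof that \(s\leq 1\).

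The hard part, needing the most care, is the structural input of the third paragraph: identifying the \(\C_{q-1}\)-invariant subgroups of \(U\) with the terms of its central series. This rests on the description of the Sylow \(3\)-subgroup of \({}^{2}\G_{2}(q)\) and the torus action on its root subgroups, which I would extract from the standard construction of the Ree groups. A secondary technical point is the case \(3\mid m\): then \(U\) is no longer a Sylow \(3\)-subgroup of \(G_v\) and \(O_{3}(G_v)\) strictly contains \(U\), so the projection Lemma~\ref{proj} and the refined \(3\)-part estimates of Lemma~\ref{r2} are needed to recover that \(A\) meets \(U\) in a large torus-invariant subgroup and that \(N^{g}\leq U\) still holds, now with ``unique normal Sylow'' replaced by the appropriate characteristic subgroup.
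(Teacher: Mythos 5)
Your reduction is exactly the paper's: assume \(s\geq 2\), invoke Hypothesis~\ref{ga} and Corollary~\ref{reemax}, and use Lemmas~\ref{r2}--\ref{r6} to kill types (ii)--(vi), leaving type (i). But you have missed the one idea the paper actually uses for type (i), and the substitute you build in its place is not complete. The subgroup \(([q^{3}]:\C_{q-1}):m\) is the point stabiliser of the \(2\)-transitive action of \(G\) on the \(q^{3}+1\) points of the Ree unital. Hence \(G_v\) is transitive on the remaining cosets, the unique nontrivial orbital is the set of \emph{all} ordered pairs of distinct vertices, and this orbital is self-paired. Since a \(G\)-arc-transitive digraph must have its arc set equal to a single orbital, the relation \(\to\) would be symmetric, contradicting the antisymmetry required of a digraph. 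This disposes of type (i) in one line, with no need for the factorisation \(G_v=AB\) at all.

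Your alternative route through the unipotent radical is not wrong in spirit, but as written it has real gaps precisely at the points you flag. First, the claim that the only \(\C_{q-1}\)-invariant subgroups of \(U\) are \(1\), \(Z(U)\), \(Z_{2}(U)\), \(U\) is the crux of your argument and is only asserted; it needs the explicit torus weights on the three layers of \(U\), a check that each layer is an irreducible \(\mathbf{F}_{3}[\C_{q-1}]\)-module (e.g.\ via \(\gcd(\theta+1,q-1)=2\) and the fact that the squares generate \(\mathbf{F}_{q}\) as a ring), plus a commutator argument to rule out invariant subgroups that are not unions of layers. Second, the step ``projecting onto \(\C_{q-1}\) identifies the common image with the whole torus'' is only clean for \(m=1\), as you concede; for \(m>1\) (and especially \(3\mid m\)) the quotient \(G_v/U\) is no longer cyclic of order prime to \(3\), and the inequality \(|X|_{p}^{2}\geq (q-1)_{p}/m_{p}\) does not by itself force \(X U/U=\C_{q-1}\). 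Third, a small slip: \(X^{g}=(A\cap L_v)^{g}=B\cap L_{w}\), not \(B\cap L_{v}\), so \(Y=X^{g}\) is false as stated (though \(|X|=|Y|\) does follow from Lemma~\ref{proj}). None of this is needed once one observes the \(2\)-transitivity.
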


\begin{proof}
Suppose for a contradiction that \(s\geq 2\). Then the conditions of Hypothesis \ref{ga} hold with \(\Soc(G)=\Ree(q)\). Since \(G\) acts vertex-primitively on \(\Gamma\), the vertex stabiliser \(G_{v}\) is a maximal subgroup of \(G\) and so is given by Corollary \ref{reemax}. By
Lemmas \ref{r2}, \ref{r3}, \ref{r4} and \ref{r6}, $G_v$ cannot be of types (ii)--(vi). Hence $G_v$ is of type $(i)$. However, in this case,  $G$ acts 2-transitively on the set of right cosets of $G_v$ in $G$, which implies that $\Gamma$ is an undirected graph, contradicting it being a digraph.  Hence the result follows.
\end{proof}

\section{Suzuki Groups}\label{s:Sz}
Again, for \(\Sz(q)\leqslant G\leqslant \Aut(\Sz(q))\), since \(\Aut(\Sz(q))=\Sz(q):(2n+1)\) we have \(G=\Sz(q):m\) for some divisor \(m\) of \(2n+1\). Since the action of \(G\) on \(\Gamma\) is vertex-primitive, a vertex stabiliser \(G_{v}\) is maximal in \(G\). The following list of the maximal subgroups of an almost simple of socle \(\Sz(q)\) may be found in \cite[Table 8.16]{holt}.

%\begin{theorem}{\rm \cite[p 385]{holt}}\label{t:Szmax}
%If \(q=2^{2n+1}\) with \(n\geq1\), then the maximal subgroups of \(\Sz(q)\) are (up to conjugacy):
%\\(i) \([q^{2}]:(q-1)\),
%\\(ii) \(\D_{2(q-1)}\),
%\\(iii) \(\C_{q+\sqrt{2q}+1}:4\),
%\\(iv) \(\C_{q-\sqrt{2q}+1}:4\),
%\\(v) \(\Sz(q_{0})\), where \(q=q_{0}^{r}\), \(r\) is prime and \(q_{0}>2\).
%\end{theorem}

%Since \(\Aut(\Sz(q))=\Sz(q):(2n+1)\) and \(\Sz(q)\leq G\leq \Sz(q):(2n+1)\), we have
%\(G=\Sz(q):m\) for some divisor \(m\) of \(2n+1\), and a vertex stabiliser does not contain $\Sz(q)$. The maximal such subgroups of $G$ are the following:

\begin{theorem}\label{cor:sz}
For \(G=\Sz(q):m\), where \(m\) divides \(2n+1\), the maximal subgroups of \(G\) not containing \(\Sz(q)\) are (up to conjugacy):
\\(i) \(([q^{2}]:(q-1)).m\),
\\(ii) \(\D_{2(q-1)}.m\),
\\(iii) \(((q+\sqrt{2q}+1):4).m\),
\\(iv) \(((q-\sqrt{2q}+1):4).m\),
\\(v) \(\Sz(q_{0}).m\), where \(q=q_{0}^{r}\), \(r\) is prime, and \(q_{0}>2\).
\end{theorem}

For the rest of this section we assume that Hypothesis \ref{ga} holds for \(G= \Sz(q):m\), where $q=2^{2n+1}$ and $m$ divides $2n+1$, and we let \(L=\Soc(G)=\Sz(q)\). Let $\pi:G_v\rightarrow G_v/L_v$ be the natural projection map. Note that since $G=LG_v$ we have that $\pi(G_v)\cong G/L\cong C_m$.

We consider separately each of the possibilities for the maximal subgroup $G_v$ according to Corollary~\ref{cor:sz}. We note in particular that, by Hypothesis \ref{ga}, \(G_{v}\) has a homogeneous factorisation \(G_{v}=AB\) where \(A=G_{uv}\) and \(B=G_{vw}\) with \(A^{g}=B\) for some \(g\in G\). This implies, by  Corollary \ref{Q}, that for each prime $p$ dividing $|G_v|$, $G_v$ has at least two subgroups of order $p$. We use these facts several times in our arguments.

\begin{lemma}\label{s2}
Suppose that Hypothesis \(\ref{ga}\) holds with \(\Soc(G)=\Sz(q)\). Then \(G_{v}\) is not a Type (ii) subgroup of \(G\). 
\end{lemma}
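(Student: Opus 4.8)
The plan is to mirror the strategy used in Lemma~\ref{r3} for the Ree groups, exploiting a large primitive prime divisor together with Corollary~\ref{Q}. Here $G_v$ is a Type (ii) subgroup, so $L_v = L\cap G_v = \D_{2(q-1)}$ and $G_v = \D_{2(q-1)}.m$ where $q = 2^{2n+1}$ and $m$ divides $2n+1$. The key structural fact I would record first is that the cyclic subgroup $\C_{q-1}$ of index $2$ in $\D_{2(q-1)}$ contains a unique subgroup of each order dividing $q-1$. I then want to locate a prime $p$ dividing $q-1$ that is too large to be ``spread out'' across $G_v$, forcing any subgroup of order $p$ to be the unique cyclic one, which then contradicts Corollary~\ref{Q}.

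First I would invoke Lemma~\ref{ppd} to produce a primitive prime divisor $p$ of $2^{2n+1}-1 = q-1$. Since $2n+1\ge 3$ here (as $n\ge 1$), the exceptional cases $(a,m)=(2,6)$ and $m=2$ do not arise, so such a $p$ exists. By the remark following the definition of primitive prime divisor, $p\equiv 1\pmod{2n+1}$ and hence $p > 2n+1 \ge m$; in particular $p\nmid m$ and $p$ is odd and $p\ne 3$ is not needed, only that $p$ is odd and larger than $m$. Because $p$ divides $q-1$ and $p\nmid m$, every element of $G_v$ of order $p$ must in fact lie in $L_v = \D_{2(q-1)}$: its image under $\pi$ has order dividing $\gcd(p,m)=1$, so it lies in the kernel $L_v$. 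Moreover, since $p$ is odd it cannot lie in the reflection coset of the dihedral group, so any such element lies in the cyclic subgroup $\C_{q-1}$.

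The heart of the argument is then the uniqueness step. Any subgroup $Q\le G_v$ of order $p$ lies inside the cyclic group $\C_{q-1}\le L_v$, and a cyclic group of order $q-1$ has exactly one subgroup of order $p$. Hence $G_v$ has a \emph{unique} subgroup of order $p$, namely this $Q$. This directly contradicts Corollary~\ref{Q}, which under Hypothesis~\ref{ga} guarantees that for every prime dividing $|G_v|$ there are at least two subgroups of that order. Since $p$ divides $|L_v| = 2(q-1)$ and hence $|G_v|$, the corollary applies and we obtain the desired contradiction, so $G_v$ cannot be of Type (ii).

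I expect the only delicate point to be verifying that all elements of order $p$ genuinely descend into the cyclic part $\C_{q-1}$: one must rule out $p$ dividing $m$ (handled by $p > m$) and rule out order-$p$ elements in the outer dihedral reflections (handled by $p$ being odd, since reflections have order $2$ and products landing outside $\C_{q-1}$ have even order in a dihedral group). Both are routine, so unlike the more intricate Type (v)/(vi) analyses this lemma should be short. As a sanity check I would note the parallel with Lemma~\ref{r3}, where the same primitive-prime-divisor-meets-dihedral-uniqueness mechanism was used against Corollary~\ref{Q}; the present case is in fact cleaner because here the relevant prime divides $q-1$ rather than $q+1$ and the dihedral group sits directly inside $L_v$.
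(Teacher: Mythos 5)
Your proof is correct and follows essentially the same route as the paper: a primitive prime divisor $p$ of $2^{2n+1}-1$ satisfies $p>2n+1\geq m$ and is odd, forcing any subgroup of order $p$ into the unique cyclic subgroup of order $p$ in $\D_{2(q-1)}$, contradicting Corollary~\ref{Q}. The paper states this more tersely, leaving implicit the details you spell out about why such a subgroup must lie in the cyclic part of $L_v$.
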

\begin{proof}
Suppose to the contrary that \(G_{v}\) is a Type (ii) subgroup of \(G\).  Then $L_v\cong D_{2(q-1)}$. By Lemma \ref{ppd} there exists a primitive prime divisor \(p\) of \(2^{2n+1}-1\), and as noted before Lemma \ref{ppd}, $p$ satisfies \(p>2n+1\geq m\). Hence \(G_{v}\) has a unique subgroup \(Q_{p}\) of order \(p\), which is a contradiction, as noted above. 
\end{proof}

\begin{lemma}\label{s3}
Suppose that Hypothesis \(\ref{ga}\) holds with \(\Soc(G)=\Sz(q)\). Then \(G_{v}\) is neither a Type (iii) subgroup nor a Type (iv) subgroup of \(G\).
\end{lemma}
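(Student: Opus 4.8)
The plan is to handle Types (iii) and (iv) together, following the template of Lemma~\ref{r4} (the analogous Ree cases). Suppose for a contradiction that \(G_v\) is of Type (iii) or (iv), so that \(L_v=L\cap G_v=S{:}T\), where \(S\) is cyclic of order \(d=q\pm\sqrt{2q}+1=2^{2n+1}\pm 2^{n+1}+1\) and \(T\cong\C_4\). Since \(d=2^{2n+1}\pm 2^{n+1}+1\) is odd, \(|S|\) and \(|T|\) are coprime. By Lemma~\ref{proj} applied to the homogeneous factorisation \(G_v=AB\) we have \(\pi(A)=\pi(B)=\pi(G_v)=\C_m\), so that \(|A\cap L_v|=|B\cap L_v|\); and since \(G_v=AB\) with \(A\cong B\) we also have \(\Pi(A)=\Pi(B)=\Pi(G_v)\), which I will use at the end.

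The key step, mirroring Claim~\ref{n}, is to show that \(|A\cap L_v|\) is coprime to \(d\). Fix a prime \(p\mid d\). Because \(S\) is cyclic and normal of order coprime to \(|T|=4\), any subgroup of \(L_v\) of order \(p\) maps trivially to \(L_v/S\) and hence lies in \(S\); thus there is a unique subgroup \(Q_p\leqslant L_v\) of order \(p\), and it is characteristic in \(L_v\) and so normal in \(G_v\) (as \(L_v\triangleleft G_v\)). If \(p\) divided \(|A\cap L_v|\), then \(Q_p\leqslant A\cap L_v\leqslant A\), whence, using \(L\triangleleft G\), \(Q_p^{g}\leqslant A^{g}\cap L=B\cap L=B\cap L_v\leqslant L_v\); uniqueness of \(Q_p\) then forces \(Q_p^{g}=Q_p\), contradicting Lemma~\ref{factor}. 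Hence \(|A\cap L_v|_p=1\) for every \(p\mid d\), so \(|A\cap L_v|\) divides \(|T|=4\) and \(|A|=|A\cap L_v|\,|\pi(A)|\leqslant 4m\); likewise \(|B|\leqslant 4m\).

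To finish I would split on \(n\). For \(n\geqslant 2\) a crude order count works: since \(d\geqslant 2^{2n+1}-2^{n+1}+1>4(2n+1)\geqslant 4m\), and \(|G_v|=|L_v|m=4dm\), we obtain \(4dm=|G_v|\leqslant|G_v|\,|A\cap B|=|A|\,|B|\leqslant 16m^{2}<4dm\), a contradiction. For \(n=1\) we have \(q=8\), \(L=\Sz(8)\) and \(m\in\{1,3\}\); here the order bound is too weak for Type (iv) with \(m=3\), so I would instead invoke the coprimality established above: \(d\) is now the prime \(13\) (Type (iii)) or \(5\) (Type (iv)), and in either case \(d\nmid m\), so \(|A|_d=|A\cap L_v|_d\,m_d=1\), i.e.\ \(d\nmid|A|\). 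This contradicts \(d\mid|G_v|\) together with \(\Pi(A)=\Pi(G_v)\), completing the case.

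I expect the key claim to be the main obstacle, since it is where the maximal-subgroup structure (the normal cyclic Hall subgroup \(S\)) and Lemma~\ref{factor} are combined. A secondary subtlety is that neither the order bound nor the divisibility argument covers every pair \((n,m)\) on its own: the order bound fails at the smallest case \(n=1\), \(m=3\) in Type (iv), while a purely primitive-prime-divisor argument would fail at \(n=2\), \(m=5\) in Type (iv) (there \(d=25=5^{2}\) and \(5\mid m\)). The two arguments must therefore be used in tandem, split cleanly as \(n=1\) versus \(n\geqslant 2\).
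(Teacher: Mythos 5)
Your proposal is correct and follows essentially the same route as the paper: the identical key claim that \(|A\cap L_{v}|\) is coprime to \(q\pm\sqrt{2q}+1\) (via the unique subgroup of order \(p\) in the cyclic normal Hall subgroup \(S\) together with Lemma~\ref{factor}), yielding \(|A|\mid 4m\). The only difference is the endgame: the paper observes that \(|G_{v}|=4m(q\pm\sqrt{2q}+1)\) divides \(|A|\cdot|B|=|A|^{2}\), which divides \(16m^{2}\), so the odd integer \(q\pm\sqrt{2q}+1\) would have to divide \(m\) --- impossible since it exceeds \(m\) for all \(n\geq 1\) --- and this divisibility argument covers every case uniformly, making your split into \(n=1\) versus \(n\geq 2\) (and the separate primitive-prime-divisor patch for \(q=8\)) unnecessary.
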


\begin{proof}
Suppose to the contrary that \(G_{v}\) is a Type (iii) or Type (iv) subgroup of \(G\). As above we have \(G_{v}=AB\) with \(A^{g}=B\) for some \(g\in G\). 
It follows from Lemma \ref{proj} that \(\pi(A)=\pi(B)=\pi(G_{v})\cong \C_{m}\), and hence \(|A\cap L_{v}|=|B\cap L_{v}|\). 

Let \(S\) and \(T\) denote cyclic subgroups of \(L_{v}=L\cap G_{v}\) of orders \(q\pm\sqrt{2q}+1\) and 4, respectively, such that \(L_{v}=S:T\).
Since \(q\pm\sqrt{2q}+1\) is an odd integer, the orders \(|S|\) and \(|T|\) are coprime. 

Let \(p\) be a prime dividing \(|S|\), and note that the cyclic group $S$ has a unique subgroup \(Q_{p}\) of order \(p\), and that $Q_p$ is the unique subgroup of order $p$ in \(L_v\).
If $p$ divides \(|A\cap L_{v}|\), then \(A\cap L_{v}\) contains  $Q_p$, and since \(|A\cap L_{v}|=|B\cap L_{v}|\), also \(Q_{p}\leqslant B\cap L_{v}\). Moreover, since \(A^{g}=B\), it follows that \(Q_{p}^{g}\) is also a subgroup of $B\cap L_v$ of order $p$, and so  \(Q_{p}^{g}=Q_{p}\). However, this contradicts Lemma \ref{factor}, and therefore  $p$ does not divide \(|A\cap L_{v}|\). Since this holds for all primes $p$ dividing $|S|$, we conclude that \(|A\cap L_{v}|\) divides $|T|=4$. 

Since $A/(A\cap L_v)\cong AL_v/L_v\leq G_v/L_v\cong\C_m$, it follows that $|A|$ divides $4m$, and hence $G_v=AB$ has order dividing $|A|\cdot|B|=|A|^2$, which divides $16m^2$. On the other hand $|G_v|= 4m(q\pm \sqrt{2q}+1)$, and hence the odd integer $q\pm\sqrt{2q}+1$ divides $m$. This is impossible since $q\pm\sqrt{2q}+1\geq 2^{2n+1}-2^{n+1}+1>2n+1\geq m$, for all $n\geq 1$. This contradiction completes the proof. 
\end{proof}

\begin{lemma}\label{s4}
 Suppose that Hypothesis \(\ref{ga}\) holds with \(\Soc(G)=\Sz(q)\). Then \(G_{v}\) is not a Type (v) subgroup of \(G\). 
\end{lemma}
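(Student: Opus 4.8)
The plan is to adapt Case~1 of the proof of Lemma~\ref{r6} almost verbatim, and in fact the Suzuki situation is cleaner: for every Type~(v) subgroup we have $q_0=2^{(2n+1)/r}>2$, and since $(2n+1)/r$ is an odd divisor of $2n+1$ exceeding $1$ it is at least $3$, so $q_0\geqslant 8$ and $H:=L_v=L\cap G_v=\Sz(q_0)$ is a genuine nonabelian simple Suzuki group. In particular there is no analogue of the degenerate base case ${}^{2}\G_{2}(3)\cong\PSL_2(8){:}3$ that forced a second case in Lemma~\ref{r6}, so a single argument should suffice. Writing $G_v=H.m$ with $q=q_0^{r}$, $r$ prime and $r,m\mid 2n+1$, I take the homogeneous factorisation $G_v=AB$ with $A=G_{uv}$, $B=G_{vw}$ and $B=A^{g}$ supplied by Hypothesis~\ref{ga}, and aim to show that $H$ lies inside one of the factors, which will contradict Lemma~\ref{factor}.

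The central step is a centraliser reduction. Set $C=\C_{G_v}(H)$. Since $H$ is nonabelian simple and normal in $G_v$ we have $H\cap C=Z(H)=1$, so $HC/C\cong H\cong\Sz(q_0)$; moreover $C$ embeds in $G_v/H\cong\C_m$ and is therefore abelian, and $G_v/C$ is almost simple with socle $HC/C$ and contained in $\Aut(\Sz(q_0))$. The factorisation $G_v=AB$ projects to $G_v/C=(AC/C)(BC/C)$. By the classification of core-free factorisations of almost simple groups with socle an exceptional group of Lie type---exactly the input used in Lemma~\ref{r6}---no such factorisation is core-free once $q_0>2$; hence one factor, say $AC/C$, contains the socle $HC/C$. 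From this I would recover $H\leqslant A$ itself by setting $A_0=A\cap HC$, using the modular law to obtain $A_0C=HC=H\times C$, and then noting that $[A_0,A_0]=H$ because $H$ is perfect while $C$ is abelian; thus $H=[A_0,A_0]\leqslant A$.

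Finally, $H\leqslant A$ yields $H^{g}\leqslant A^{g}=B\leqslant G_v$, and since $H\leqslant L$ with $L\trianglelefteq G$ we also have $H^{g}\leqslant L^{g}=L$, so $H^{g}\leqslant L\cap G_v=L_v=H$; comparing orders gives $H^{g}=H$. If $m>1$ this says that $g$ normalises the proper nontrivial normal subgroup $H=L_v$ of $G_v$, contradicting Lemma~\ref{factor}; if $m=1$ then $H=G_v$ and $H\leqslant A$ forces $A=G_v$, contradicting that the factorisation is proper. Either way $G_v$ cannot be of Type~(v). I expect the genuine obstacle to be the descent ``$HC/C\leqslant AC/C\Rightarrow H\leqslant A$'' together with securing the precise form of the external factorisation result for almost simple groups with Suzuki socle; the remaining steps are bookkeeping, since the factorisation classification is doing the real work.
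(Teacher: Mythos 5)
Your proposal is correct and follows essentially the same route as the paper's proof: the centraliser reduction to $G_v/\C_{G_v}(\Sz(q_0))$, the appeal to the classification of core-free factorisations of almost simple exceptional groups to place the socle inside one factor, and the final contradiction with Lemma~\ref{factor}. Your modular-law/perfectness argument for descending from $HC/C\leqslant AC/C$ to $H\leqslant A$, and your explicit handling of the $m=1$ case via properness of the factorisation, merely fill in details that the paper leaves implicit.
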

\begin{proof}
Suppose to the contrary that \(G_{v}=\Sz(q_{0}).m\),  where \(q=q_{0}^{r}\), for some prime \(r\) dividing $2n+1$. As above we have \(G_{v}=AB\) with \(A^{g}=B\) for some \(g\in G\). Let \(C\) denote the centraliser of \(\Sz(q_{0})\) in \(G_{v}\).
Then  
\[\Aut(\Sz(q_{0}))\gtrsim G_{v}/C=(AB)/C=(AC/C)(BC/C)\gtrsim \Sz(q_{0}).
\] 
However, \(G_{v}/C\) does not have a core-free factorisation by \cite[Theorem B]{transitive}, and therefore one of the factors, say $AC/C$, contains  \(\Sz(q_{0})\).
This,  together with the fact that \(L_{v}\cap C=1\), implies that  \(\Sz(q_{0})=L_{v}\leqslant A\). 
Moreover, since \(A^{g}=B\), we have  \(L_{v}^{g}\leqslant A^{g}=B\). However \(L_{v}\) is the only subgroup of  \(G_{v}\) isomorphic to \(\Sz(q_{0})\), and hence \(L_{v}^{g}=L_{v}\). This contradicts Lemma \ref{factor}, and completes the proof.
\end{proof}

Now we can collect all these results to prove the following result. 

\begin{theorem}\label{suzuki}
Suppose that \(\Gamma\) is a \(G\)-vertex-primitive \((G,s)\)-arc-transitive digraph such that \(\Soc(G)=\Sz(q)\) with \(q=2^{2n+1}\) for some positive integer \(n\). Then \(s\leq 1\).
\end{theorem}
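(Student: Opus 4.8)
The plan is to mirror the structure of Theorem~\ref{ree}, assembling the case-by-case lemmas already proved for the Suzuki groups together with the treatment of the remaining maximal-subgroup type. First I would suppose for a contradiction that $s\geq 2$, so that Hypothesis~\ref{ga} holds with $\Soc(G)=\Sz(q)$, where $q=2^{2n+1}$ and $G=\Sz(q):m$ for some divisor $m$ of $2n+1$. By vertex-primitivity, $G_v$ is a maximal subgroup of $G$ not containing $\Sz(q)$, and hence is one of the types (i)--(v) listed in Corollary~\ref{cor:sz}.

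Next I would invoke the lemmas already established to eliminate four of the five types: Lemma~\ref{s2} rules out Type~(ii), Lemma~\ref{s3} rules out Types~(iii) and~(iv), and Lemma~\ref{s4} rules out Type~(v). This leaves only Type~(i), namely $G_v=([q^2]:(q-1)).m$, as a possible vertex stabiliser.

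The main obstacle is therefore dealing with Type~(i), which is exactly the parabolic (point-stabiliser) subgroup for the $2$-transitive action of $\Sz(q)$ on the Suzuki ovoid of $q^2+1$ points. The key observation is that this action is $2$-transitive, so $G$ acts $2$-transitively on the coset space $[G:G_v]$. For a $2$-transitive group, any $G$-invariant irreflexive relation $\to$ on the vertex set is either empty or the complete relation, and in the latter case it is symmetric; in either case the relation cannot be the anti-symmetric, irreflexive relation of a genuine digraph with at least one arc. Concretely, if $v\to w$ is an arc then $2$-transitivity forces $w\to v$ as well, violating anti-symmetry. Hence $\Gamma$ would have to be an undirected graph, contradicting the standing assumption that $\Gamma$ is a digraph. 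This is precisely the argument used to close out the Type~(i) case in the proof of Theorem~\ref{ree}, and it transfers verbatim to the Suzuki setting.

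Finally I would collect these observations: since every possible type for $G_v$ leads to a contradiction, the assumption $s\geq 2$ is untenable, and therefore $s\leq 1$, completing the proof. I note one cosmetic point worth flagging: the statement of Theorem~\ref{suzuki} reads ``$q=3^{2n+1}$'' whereas the Suzuki groups require $q=2^{2n+1}$, so I would correct this typo, but it has no effect on the argument itself.
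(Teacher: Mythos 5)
Your proposal matches the paper's proof essentially verbatim: assume $s\geq 2$, invoke Hypothesis~\ref{ga} and Corollary~\ref{cor:sz}, eliminate Types (ii)--(v) via Lemmas~\ref{s2}, \ref{s3} and \ref{s4}, and dispose of Type (i) by observing that the $2$-transitive action forces $\Gamma$ to be undirected. The typo $q=3^{2n+1}$ you flag is indeed present in the paper's statement and your correction to $q=2^{2n+1}$ is right.
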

\begin{proof}
Suppose for a contradiction that \(s\geq 2\). Then the conditions of Hypothesis \ref{ga} hold with \(\Soc(G)=\Sz(q)\). Since \(G\) acts vertex-primitively on \(\Gamma\), the vertex stabiliser \(G_{v}\) is a maximal subgroup of \(G\) and so is given by Corollary \ref{cor:sz}. By Lemmas \ref{s2}, \ref{s3} and \ref{s4}, $G_v$ is not of type $(ii)--(v)$ and so $G_v$ must be of type (i). However, in this case $G$ acts 2-transitively on the set of right cosets of $G_v$ and so $\Gamma$ is an undirected graph, contradicting it being a digraph. Hence the result follows.
\end{proof}

Theorem \ref{general}  follows immediately from Theorems \ref{ree} and \ref{suzuki}.

\section{Examples}
In this final section, we construct examples of vertex-primitive \((G,1)\)-arc-transit\-ive digraphs with \(\Soc(G)=\Sz(2^{2n+1})\) and \(\Ree(3^{2n+1})\), respectively. The following is the mechanism by which we construct the examples:

Let \(G\) be a group, \(H\) a subgroup of \(G\) which does not contain any non-trivial normal subgroup of \(G\), \(V :=\{Hz:z\in G\)\}, and let \(g\in G\)   such that \(g^{-1}\notin HgH\). We define a binary relation $\to\,$  on \(V\) by  
\begin{center}
\(Hx\to Hy\)\quad if and only if\quad  \(yx^{-1}\in HgH\) for any \(x,y\in G\).    
\end{center}
 Then \((V,\to)\) is a digraph, which we denote  by \(\Cos(G,H,g)\). Since \(yg(xg)^{-1}=ygg^{-1}x=yx^{-1}\), right multiplication by elements of $G$ preserves the relation $\to\,$ and hence induces automorphisms of \((V,\to)\), yielding a subgroup \(\mathrm{R}_{H}(G)\cong G\) of \(\Aut(\Cos(G,H,g))\). Further the subgroup $\mathrm{R}_H(H)$ of right multiplications by elements of $H$ is the stabiliser in \(\mathrm{R}_{H}(G)\) of the vertex $H$ of  \((V,\to)\), and it follows from the definition of the relation $\to\,$ that \(\mathrm{R}_{H}(H)\) acts transitively on the set of arcs  $(H, Hx)$ beginning with $H$, since these arcs are precisely those of the form $(H, Hgh)$ for $h\in H$. Thus \(\mathrm{R}_{H}(G)\) acts arc-transitively on \(\Cos(G,H,g)\).

 We aim to find a maximal subgroup \(H\leqslant G\) and an element \(g\in G\) such that \(g^{-1}\notin HgH\) to obtain a 1-arc-transitive digraph \(\Cos(G,H,g)\), for \(G=\Sz(2^{2n+1})\) and \(G=\Ree(3^{2n+1})\).
 
\subsection{A one-arc transitive digraph admitting a Suzuki group}
Here \(G=\Sz(q)\), where \(q=2^{2n+1}\) with $n\geq 1$. Let $\Omega$ denote a set of size $q^2+1$ on which $G$ acts $2$-transitively. Let $a, b\in \Omega$ with $a\ne b$. We define the following notation:
\begin{itemize}
\item[(i)]  \(L:=G_a\), so \(L\cong [q^{2}]:(q-1)\);
    \item[(ii)]  \(K:=G_a\cap G_b\), so \(K=\langle\kappa\rangle\cong q-1\), for some \(\kappa\in G\);
    \item[(iii)] \(Q=[q^2]\), the normal Sylow $2$-subgroup of \(L\), so \(L=Q: K\);
    \item[(iv)] an involution  \(\tau\in \N_{G}(K)\), so \(\N_{G}(K)=\langle \kappa, \tau \,|\, \kappa^{q-1}=\tau^{2}=1,\tau\kappa\tau=\kappa^{-1}\rangle\), see \cite[Proposition 3]{szk};
    \item[(v)] an element \(\rho\in Q\) of order 4.
\end{itemize}
We use this notation throughout this subsection, and also the following results.

\begin{lemma}{\rm \cite[Proposition 1]{szk}\label{cent}}
For any non-trivial element  \(x\in Q\), the centraliser \(\C_{G}(x)\leq Q\).
\end{lemma}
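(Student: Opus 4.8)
The plan is to use the 2-transitive action of \(G=\Sz(q)\) on \(\Omega\) to show that the Sylow 2-subgroups of \(G\) form a trivial-intersection family, which immediately confines \(\C_G(x)\) to the point stabiliser \(L=G_a\); a short argument inside \(L=Q\rtimes K\) then pushes the centraliser into \(Q\). First I would record that \(Q\) acts regularly on \(\Omega\setminus\{a\}\): since \(G\) is 2-transitive, \(L=G_a\) is transitive on the \(q^2\) points of \(\Omega\setminus\{a\}\) with two-point stabiliser \(\Stab_L(b)=G_a\cap G_b=K\); as \(Q\trianglelefteq L\) and \(Q\cap K=1\), every point-stabiliser \(\Stab_Q(c)\) is an \(L\)-conjugate of \(\Stab_Q(b)=Q\cap K=1\), so \(Q\) is semiregular, hence regular by \(|Q|=q^2\). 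In particular every nontrivial \(y\in Q\) satisfies \(\Fix_\Omega(y)=\{a\}\).

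Since \(Q=O_2(L)\) is characteristic in \(G_a\), conjugation gives \(Q^g=O_2(G_{a^g})=:Q_{a^g}\). Hence if \(1\neq x\in Q\) and \(g\in\C_G(x)\), then \(x=x^g\in Q_a\cap Q_{a^g}\); as \(x\) fixes only the point \(a\), this forces \(a^g=a\), so \(g\in G_a=L\) and \(\C_G(x)\leq L\). (Equivalently, distinct Sylow 2-subgroups meet trivially.) It remains to show \(\C_L(x)\leq Q\), and since \(Q\) is the unique Sylow 2-subgroup of \(L=Q\rtimes K\) it suffices to check that \(\C_L(x)\) contains no element of odd order \(>1\). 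If it did, say \(g\), then by Schur--Zassenhaus \(g\) is \(Q\)-conjugate to a nontrivial element of \(K\), say \(g=k^{u}\) with \(1\neq k\in K\) and \(u\in Q\); the relation \(x^g=x\) rearranges to \((x^{u^{-1}})^k=x^{u^{-1}}\), producing a nontrivial element \(x^{u^{-1}}\in\C_Q(k)\). So everything reduces to proving that \(\C_Q(k)=1\) for every nontrivial \(k\in K\); under the identification \(\Omega\setminus\{a\}\leftrightarrow Q\) given by the regular action (with \(b\leftrightarrow 1\)), this is exactly the assertion that \(K=G_{a,b}\) acts semiregularly on \(\Omega\setminus\{a,b\}\), i.e.\ that all three-point stabilisers \(G_{a,b,c}\) are trivial.

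I expect this last point to be the main obstacle, since it is the one ingredient that does not follow formally from the permutation setup or the factorisation data but is the arithmetic core of Suzuki's structure theory. I would deduce it from the classical fact (established via the explicit coordinatisation of \(\Sz(q)\)) that the centraliser of an involution is a full Sylow 2-subgroup, i.e.\ \(\C_G(t)=Q\) whenever \(t\) is an involution of \(Q\), noting that all involutions of the Suzuki 2-group \(Q\) lie in \(Z(Q)\). Granting \(\C_G(t)=Q\): if \(1\neq y\in\C_Q(k)\) then some power \(t=y^{2^{j}}\) is an involution commuting with \(k\), whence the odd-order element \(k\) lies in \(\C_G(t)=Q\), a contradiction; thus \(\C_Q(k)=1\), \(\C_L(x)\) is a 2-group, and \(\C_G(x)=\C_L(x)\leq Q\). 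Indeed, once \(\C_G(t)=Q\) is in hand there is an even shorter route that bypasses the trivial-intersection argument entirely: if \(x\) is an involution then \(x\in Z(Q)\) gives \(\C_G(x)=Q\), while if \(x\) has order 4 then \(x^{2}\in Z(Q)\) is an involution and \(\C_G(x)\leq\C_G(x^{2})=Q\).
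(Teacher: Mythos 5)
The paper does not actually prove this lemma --- it is quoted verbatim from Suzuki \cite[Proposition~1]{szk}, where it is established by direct computation with the explicit coordinatisation of $\Sz(q)$ (the elements of $Q$ written as $S(\alpha,\beta)$ with the twisted multiplication, and the torus elements $M(\lambda)$ checked to centralise no nontrivial $S(\alpha,\beta)$). So the comparison is really with Suzuki's classical argument, and your route is genuinely different: it is permutation-theoretic rather than computational. The first two-thirds of your argument is correct and nicely self-contained: the regularity of $Q$ on $\Omega\setminus\{a\}$, the trivial-intersection property of the Sylow $2$-subgroups forcing $\C_G(x)\leq L$, and the Hall/Schur--Zassenhaus reduction showing that a nontrivial odd-order element of $\C_L(x)$ would produce a nontrivial element of $\C_Q(k)$ for some $1\neq k\in K$, are all sound.

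The caveat --- which you flag honestly, but which should be named as the real issue --- is that the remaining ingredient, $\C_G(t)=Q$ for an involution $t\in Q$, is not an independent auxiliary fact: its nontrivial half, $\C_G(t)\leq Q$, \emph{is} the lemma in the case where $x$ is an involution (the reverse inclusion being immediate from $t\in Z(Q)$). Together with the structural facts you also import ($Q$ has exponent $4$ and $\Omega_1(Q)=Z(Q)$), your short final paragraph then disposes of the order-$4$ case via $\C_G(x)\leq\C_G(x^2)$. So what you have actually proved is the implication ``involution case $\Rightarrow$ general case,'' which is a correct and clean reduction but leaves the crux untouched. To close it without circularity you would either reproduce Suzuki's coordinate computation showing $\C_Q(k)=1$ for $1\neq k\in K$ (equivalently, that three-point stabilisers are trivial), or count involutions: there are $q^2+1$ pairwise trivially-intersecting Sylow $2$-subgroups each containing the $q-1$ involutions of its centre, these $(q^2+1)(q-1)$ involutions form a single conjugacy class, and hence $|\C_G(t)|=|G|/\bigl((q^2+1)(q-1)\bigr)=q^2$, forcing $\C_G(t)=Q$. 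Either supplement would make your argument a complete alternative to the cited one.
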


\begin{lemma}{\rm \cite[p 108-109]{szk}\label{tau}}
The element $\tau$ satisfies \(b=a^{\tau}\) and \(a=b^{\tau}\), so \(\tau L\tau = G_b\), 
\(L\cap\tau L\tau=K\), and \(Q\cap\tau L\tau=\{1\}\).
\end{lemma}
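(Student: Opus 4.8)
The plan is to reduce everything to the single geometric fact that \(\tau\) interchanges the two points \(a\) and \(b\); once \(a^\tau=b\) and \(b^\tau=a\) are established, the three displayed equalities follow formally. Indeed, since \(\tau\) is an involution we have \(\tau L\tau=\tau^{-1}G_a\tau=G_{a^\tau}=G_b\); then \(L\cap\tau L\tau=G_a\cap G_b=K\) directly from the definition of \(K\); and finally, since \(Q\leqslant L=G_a\), any element of \(Q\cap\tau L\tau=Q\cap G_b\) fixes both \(a\) and \(b\) and hence lies in \(G_a\cap G_b=K\), so \(Q\cap\tau L\tau\leqslant Q\cap K=1\), the last equality holding because \(|Q|=q^2\) is a power of \(2\) while \(|K|=q-1\) is odd.

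So the real work is to prove that \(\tau\) swaps \(a\) and \(b\). First I would show that \(K\) fixes no point of \(\Omega\) other than \(a\) and \(b\). The cleanest route uses Lemma \ref{cent}: for every non-identity \(x\in Q\) we have \(\C_G(x)\leqslant Q\), so \(\C_K(x)\leqslant K\cap Q=1\); thus \(K\) acts fixed-point-freely by conjugation on \(Q\setminus\{1\}\), and therefore \(L=Q\rtimes K\) is a Frobenius group with kernel \(Q\) and complement \(K\). Since \(|Q|=q^2=|\Omega\setminus\{a\}|\) and \(Q\cap K=1\), the kernel \(Q\) acts regularly on \(\Omega\setminus\{a\}\), so the complement \(K=L_b\) fixes exactly one further point, namely the base point \(b\), and acts semiregularly on the remaining \(q^2-1\) points. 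Hence \(\Fix(K)=\{a,b\}\). (Alternatively one may simply invoke that \(\Sz(q)\) is a Zassenhaus group, so that the stabiliser of three points is trivial and \(K=G_{ab}\) is semiregular off \(\{a,b\}\).)

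With \(\Fix(K)=\{a,b\}\) in hand, I note that \(\tau\in\N_G(K)\) normalises \(K\), so \(\tau\) permutes \(\Fix(K)=\{a,b\}\). If \(\tau\) fixed both points it would lie in \(G_a\cap G_b=K\); but \(\tau\) has order \(2\) whereas \(K\) is cyclic of odd order \(q-1\), a contradiction. Therefore \(\tau\) interchanges \(a\) and \(b\), which completes the argument via the formal deductions of the first paragraph.

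The main obstacle is precisely the claim \(\Fix(K)=\{a,b\}\), i.e. the semiregular action of the torus \(K\) on the points away from \(a\) and \(b\); everything else is bookkeeping with coprime orders and the fact that \(\tau\) is an involution. I expect the Frobenius argument above, fed by Lemma \ref{cent}, to be the most self-contained way to secure it, but if the standard structural facts about Suzuki groups as Zassenhaus groups are taken as known then that fact can be quoted directly.
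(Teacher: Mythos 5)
Your proof is correct. Note, however, that the paper offers no argument of its own for this lemma: it is stated as a quoted result from Suzuki's original paper \cite[pp.~108--109]{szk}, so there is no in-paper proof to compare against; your derivation is a legitimate self-contained substitute for the citation. The formal reductions in your first paragraph are all sound (since \(\tau^{-1}=\tau\) one has \(\tau L\tau=G_{a^{\tau}}\), and \(Q\cap K=1\) because \(|Q|\) is a power of \(2\) while \(|K|=q-1\) is odd). The substantive step \(\Fix(K)=\{a,b\}\) is also handled correctly: Lemma~\ref{cent} gives \(\C_{K}(x)\leqslant K\cap Q=1\) for \(1\neq x\in Q\); the \(2\)-transitivity of \(G\) makes \(L\) transitive on \(\Omega\setminus\{a\}\) with \(L_{b}=K\), hence \(Q_{b}=Q\cap K=1\) and \(Q\) is regular on \(\Omega\setminus\{a\}\); identifying \(\Omega\setminus\{a\}\) with \(Q\) turns the \(K\)-action into conjugation, so a nontrivial element of \(K\) fixes only \(b\) off \(a\). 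Then any involution of \(\N_{G}(K)\) permutes \(\{a,b\}\) and cannot fix both points, since it would then lie in the odd-order group \(K\); hence it swaps them. The only steps worth spelling out more explicitly are the transitivity of \(L\) on \(\Omega\setminus\{a\}\) (used silently in the regularity claim) and the identification of the \(K\)-action on \(\Omega\setminus\{a\}\) with its conjugation action on \(Q\); both are routine. In effect you have reproved, in this special case, the Zassenhaus-group property of \(\Sz(q)\) that Suzuki establishes, which is exactly what the citation stands in for.
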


We now give our construction.

\begin{lemma}
Let \(H := \N_{G}(K)\) and \(g := \rho\). Then \(g^{-1}\notin HgH\), and \(\Cos(G, H, g)\) is a \((G,1)\)-arc-transitive digraph. 
\end{lemma}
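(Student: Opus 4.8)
The plan is to prove two separate assertions: first that $g^{-1}\notin HgH$ (which guarantees the relation $\to$ is anti-symmetric so that $\Cos(G,H,g)$ is genuinely a digraph), and second that $\Cos(G,H,g)$ is $(G,1)$-arc-transitive. The second assertion is essentially free: the preamble already establishes that $\mathrm{R}_H(G)\cong G$ acts arc-transitively on $\Cos(G,H,g)$ for any choice of $H$ and $g$ with $g^{-1}\notin HgH$. So the entire content of the lemma lies in verifying the condition $g^{-1}=\rho^{-1}\notin H\rho H$, where $H=\N_G(K)=\langle\kappa,\tau\rangle$ and $\rho\in Q$ has order $4$.

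My approach to the key condition would be by contradiction: suppose $\rho^{-1}\in H\rho H$, so that $\rho^{-1}=h_1\rho h_2$ for some $h_1,h_2\in H$. First I would recall the structure of $H$: by item (iv) it is dihedral of order $2(q-1)$, with cyclic part $K=\langle\kappa\rangle\leq L$ and the involution $\tau$ inverting $\kappa$. The natural strategy is to analyse the equation $\rho^{-1}=h_1\rho h_2$ by splitting into cases according to whether $h_1,h_2$ lie in the cyclic subgroup $K$ or in the coset $K\tau$. Rearranging gives $h_1\rho h_2\rho=1$, or equivalently relations placing conjugates/products of $\rho$ inside $H$ or its cosets; the point is to exploit that $\rho\in Q$ is a $2$-element of order $4$ while $K$ has odd order $q-1$ and interacts with $Q$ only through the semidirect product structure $L=Q\rtimes K$.

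The main obstacle — and the step requiring genuine care — will be the case distinction involving $\tau$, since $\tau\notin L$ and $\tau L\tau=G_b$ meets $L$ only in $K$ (Lemma~\ref{tau}). The cleanest route is likely to show that if $h_1,h_2\in K$, then $h_1\rho h_2\in Q$ (as $K$ normalises $Q$), and comparing with $\rho^{-1}\in Q$ reduces the problem to a statement inside $Q\rtimes K$ where the centraliser information of Lemma~\ref{cent} forces $\rho^{-1}$ and $\rho$ to be $K$-conjugate, which one rules out by an order or fixed-point argument. The genuinely delicate subcases are those where at least one of $h_1,h_2$ lies in $K\tau$: here I expect to use Lemma~\ref{tau}, specifically $Q\cap\tau L\tau=\{1\}$ and $L\cap\tau L\tau=K$, to show that an expression like $\tau\rho$ or $\rho\tau$ cannot produce an element of $Q$, thereby contradicting $\rho^{-1}\in Q$.

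In summary, the plan is: (1) invoke the preamble to reduce the lemma to checking $g^{-1}\notin HgH$; (2) assume $\rho^{-1}=h_1\rho h_2$ with $h_1,h_2\in H=\langle\kappa,\tau\rangle$ and split into cases by membership in $K$ versus $K\tau$; (3) in the purely cyclic case, use $L=Q\rtimes K$ together with Lemma~\ref{cent} to derive a contradiction from $\rho$ and $\rho^{-1}$ lying in the same $K$-orbit; (4) in the cases involving $\tau$, use Lemma~\ref{tau} (the facts $Q\cap\tau L\tau=\{1\}$ and $L\cap\tau L\tau=K$) to contradict $\rho^{-1}\in Q$. I anticipate the $\tau$-cases to be the crux, as they require tracking elements across the two point-stabilisers $G_a$ and $G_b$ rather than working inside a single nice subgroup.
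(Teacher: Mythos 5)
Your plan matches the paper's proof essentially step for step: reduce to showing $\rho^{-1}\notin H\rho H$, write $\rho^{-1}=x\rho y$ and split according to whether $x,y$ lie in $K$ or in $K\tau$ (the paper first rules out the mixed cases using $L\cap \N_{G}(K)=K$), handle the case $x,y\in K$ via Lemma~\ref{cent}, and handle the case $x,y\in K\tau$ via $Q\cap\tau L\tau=\{1\}$ from Lemma~\ref{tau}. One small imprecision: for $x,y\in K$ the product $x\rho y=(x\rho x^{-1})(xy)$ lies in $L=Q\rtimes K$ but not automatically in $Q$; you must first compare with $\rho^{-1}\in Q$ to get $xy\in Q\cap K=\{1\}$, and then $x^{2}\in\C_{G}(\rho)\cap K\leqslant Q\cap K=\{1\}$ together with $|K|=q-1$ odd forces $x=1$ and hence the contradiction $\rho=\rho^{-1}$.
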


\begin{proof}
As we explained above, if \(g^{-1}\notin HgH\), then \(\Cos(G, H, g)\) is a \((G,1)\)-arc-transitive digraph. So it is sufficient to prove that \(g^{-1}\notin HgH\).
Suppose that this is not the case, that is, there exist \(x, y\in H=\N_G(K)\) such that \(\rho^{-1}=x\rho y\). 

Note that $L=Q: K$ and $L\cap \N_G(K)=K$, and also that $\rho\in Q<L$. Thus if $x\in K$ then $y=\rho^{-1}x^{-1}\rho\in L$ and hence $y\in L\cap \N_G(K)=K$. Similarly if $y\in K$ then also $x\in K$. Thus $x, y$ are either both in $K$, or both in $\N_G(K)\setminus K$.

Suppose first that $x, y\in K$. Now \(\rho\in Q\), and since  $x\in K<L$ and \(Q\) is a normal subgroup of \(L\), it follows that  $x\rho x^{-1}\in Q$, and also  \((x\rho x^{-1})(xy)=x\rho y=\rho^{-1}\in Q\). This implies that $xy\in Q$ and hence $xy\in K\cap Q=\{1\}$. Thus $y=x^{-1}$, and so \(\rho^{-1}=x\rho x^{-1}\), which implies that $x^2\in \C_G(\rho)$. By Lemma \ref{cent}, \(\C_{G}(\rho)\leqslant Q\), so \(x^{2}\in K\cap Q=\{1\}\). However, \(x\in K\) and $|K|=q-1$ is odd. Hence \(x=1\), so  $\rho^{-1}=x\rho x^{-1}=\rho$, which contradicts the fact that $\rho$ has order $4$. 
Thus we must have $x, y\in \N_G(K)\setminus K$, and hence $x=\kappa^i\tau$ and $y=\kappa^j\tau$, for some $i, j$. This implies that $\rho^{-1}=x\rho y = \tau(\kappa^{-i}\rho\kappa^j)\tau\in \tau L\tau$, and we also have $\rho^{-1}\in Q$. Thus $\rho^{-1}\in Q\cap \tau L \tau$ and so, by Lemma \ref{tau}, $\rho^{-1}=1$, which is a contradiction. This completes the proof.
\end{proof}

\subsection{A one-arc transitive digraph admitting a Ree group}
Here \(G=\Ree(q)\), where \(q=3^{2n+1}\) with $n\geq 1$.
Although several  \((G,2)\)-arc-transitive undirected graphs have been constructed, see \cite{fang}, we are interested in constructing a  \((G,1)\)-arc-transitive digraph with \(G\) acting primitively on the vertex set.  
Our treatment follows Wilson's description of the group $G$ given in his book \cite[Section 4.5]{Wilson}.
It is different from some other constructions for these groups, say in \cite{Nu}, which require knowledge of Lie algebras and algebraic groups. 
Wilson has an elementary approach developed in \cite{appr} and \cite{cons}, and we use the detailed description given in \cite[p 134-138]{Wilson}.

Wilson~\cite{Wilson} starts with a faithful $7$-dimensional representation of the group \(G=\Ree(3^{2n+1})\) on a space \(V\) over a field \(\mathbf{F}_{q}\) of order $q$. The space $V$ admits a $G$-invariant non-degenerate symmetric bilinear form \(f\)  with an orthonormal basis \(\mathcal{B}:=\{u_{0}, u_1,\ldots,u_6\}\). He defines a second basis 
\(\mathcal{C}:=\{v_{1}, v_2,\ldots,v_7\}\) for $V$ by 
\[\begin{array}{llp{1cm}ll}
v_{1}&=u_{3}+u_{5}+u_{6},& &
v_{2}&=u_{1}+u_{2}+u_{4},\\
v_{3}&=-u_{0}-u_{3}+u_{6},& &
v_{4}&=u_{2}-u_{1},\\
v_{5}&=-u_{0}+u_{3}-u_{6},& &
v_{6}&=-u_{1}-u_{2}+u_{4},\\
v_{7}&=-u_{3}+u_{5}-u_{6}.
\end{array}\]
He shows that the maps \(\gamma\) and \(\sigma\) given by:
\begin{equation}
    u_{i}^{\gamma}=\begin{cases}u_{i}, &i=0,1,3\\
                                -u_{i}, &i=2,4,5,6\nonumber
    
    \end{cases}
\end{equation} and 
\begin{equation}
    u_{i}^{\sigma}=\begin{cases} u_{i}, &i=0,4,5\\
                                -u_{i}, &i=1,2,3,6\nonumber
    \end{cases}
\end{equation} 
are commuting involutions lying in the group $G$. Thus $G$ contains the subgroup 
\(K=\langle \gamma, \sigma\rangle=\langle \gamma\rangle\times\langle \sigma\rangle\cong 2^{2}\). Moreover, we let \(\delta:=\sigma\gamma\), and find that 
\begin{equation}
    u_{i}^{\delta}=\begin{cases}u_{i}, &i=0,2,6\\
                               -u_{i}, &i=1,3,4,5\nonumber
    \end{cases}
\end{equation}Let \(T:= \N_G(K)\) and \(H=\C_{G}(\sigma)\). Note that by \cite[Lemma 2.2]{fang}, \(T\) and \(H\) are maximal subgroups of \(G\) and \(T\cong (2^{2}\times \D_{\frac{q+1}{2}}):3\) and \(H\cong 2\times \PSL_{2}(q)\).  Let us denote by \(W\), \(W_{1}\), \(W_{2}\) and \(W_{3}\) the subspace \(\langle u_{1},\ldots, u_{6}\rangle\), \(\langle u_{1}, u_{3}\rangle\), \(\langle u_{2}, u_{6}\rangle\) and \(\langle u_{4}, u_{5}\rangle\), respectively. 
 For a subspace $U$ of $V$, we denote the setwise stabiliser in $G$ of $U$ by \(\Stab_{G}(U)\).  We need the following properties of $T$.

\begin{lemma}\label{l:H}
The subgroup 
\(T=\Stab_{G}(\langle u_{0}\rangle)\), and \(T\) leaves invariant the subspace \(W :=\langle u_{1},u_2,\ldots, u_6\rangle\).
\end{lemma}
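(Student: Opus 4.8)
The plan is to identify $\langle u_0\rangle$ intrinsically as the fixed space of $K$ on $V$, and then to exploit the $G$-invariant form $f$. First I would read off the action of the generators of $K=\langle\gamma,\sigma\rangle$ on the orthonormal basis $\mathcal{B}$: we have $\Fix_V(\gamma)=\langle u_0,u_1,u_3\rangle$ and $\Fix_V(\sigma)=\langle u_0,u_4,u_5\rangle$, so the common fixed space $\Fix_V(K)=\Fix_V(\gamma)\cap\Fix_V(\sigma)$ equals $\langle u_0\rangle$. (In the same way the nontrivial common $K$-eigenspaces are exactly $W_1,W_2,W_3$, giving $V=\langle u_0\rangle\oplus W_1\oplus W_2\oplus W_3$; this decomposition underlies the invariance of $W$ as well.)

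Next I would prove $T\leqslant\Stab_G(\langle u_0\rangle)$ and then promote this to equality. Since $T=\N_G(K)$, for $g\in T$, $w\in\Fix_V(K)$ and $k\in K$ we may write $k'=gkg^{-1}\in K$, so that $(w^g)^k=w^{gk}=w^{k'g}=(w^{k'})^g=w^g$; hence $w^g\in\Fix_V(K)=\langle u_0\rangle$, and $T$ fixes $\langle u_0\rangle$ setwise. For equality I would invoke that $T$ is a maximal subgroup of $G$ (recorded just before the lemma, via \cite[Lemma 2.2]{fang}) together with the fact that $\Stab_G(\langle u_0\rangle)$ is a proper subgroup of $G$; the chain $T\leqslant\Stab_G(\langle u_0\rangle)<G$ with $T$ maximal then forces $T=\Stab_G(\langle u_0\rangle)$.

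For the second assertion, the key observation is that $W=\langle u_0\rangle^{\perp}$ relative to $f$: since $\mathcal{B}$ is orthonormal, $f(v,u_0)$ is just the $u_0$-coordinate of $v$, so $\langle u_0\rangle^{\perp}=\langle u_1,\ldots,u_6\rangle=W$. As $T\leqslant G$ preserves $f$ and, by the previous paragraph, preserves $\langle u_0\rangle$, it preserves $\langle u_0\rangle^{\perp}$: for $g\in T$, $w\in W$ and $u\in\langle u_0\rangle$ we have $f(w^g,u)=f(w,u^{g^{-1}})=0$, because $u^{g^{-1}}\in\langle u_0\rangle$ and $w\in\langle u_0\rangle^{\perp}$. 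Hence $w^g\in W$, so $T$ leaves $W$ invariant. (Alternatively, $T$ permutes the four common $K$-eigenspaces preserving dimension, fixing the one-dimensional space $\langle u_0\rangle$ and permuting $W_1,W_2,W_3$, whence it fixes their sum $W$.)

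The only input that is not routine linear algebra is the properness of $\Stab_G(\langle u_0\rangle)$, i.e.\ that $G$ stabilises no one-dimensional subspace of $V$; this follows from the irreducibility of the seven-dimensional $\mathbf{F}_{q}G$-module underlying Wilson's construction in \cite[Section 4.5]{Wilson} (were $\langle u_0\rangle$ $G$-invariant, then so would be $\langle u_0\rangle^{\perp}=W$, contradicting irreducibility). Everything else reduces to reading off eigenvalues from the definitions of $\gamma$ and $\sigma$, together with the standard fact that a subgroup preserving a nondegenerate form and a subspace also preserves its perpendicular.
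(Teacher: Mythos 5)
Your proof is correct and follows essentially the same route as the paper: identify $\langle u_0\rangle$ as $\Fix(K)$, observe that $T=\N_G(K)$ preserves this fixed space and its orthogonal complement $W$, and use maximality of $T$ to upgrade containment to equality. The only difference is that you explicitly justify the properness of $\Stab_G(\langle u_0\rangle)$ via irreducibility of the $7$-dimensional module, a step the paper leaves implicit.
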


\begin{proof}
Let \(\Fix(K)\) be the subspace of fixed points of \(K\) in \(V\). Then \(\Fix(K)=\langle u_{0}\rangle\), by the definitions of $\gamma$ and $\sigma$, and since $\mathcal{B}$ is an orthonormal basis it follows that \(\Fix(K)^{\perp}=W\). Since \(T=\N_G(K)\) normalises \(K\), it follows that \(T\) leaves invariant both \(\Fix(K)\) and  \(\Fix(K)^{\perp}\). 
Finally since $T$ is maximal in $G$, it follows that $T$ is equal to the full stabiliser of 
\(\Fix(K)\).
\end{proof}

For any \(x\in\{\delta,\gamma,\sigma\}\), let \(\C_{W}(x)=\{u\in W|u^{x}=u\}\).

\begin{lemma}\label{permuteW_i}
With above notation, \(T\) permutes \(W_{1}\), \(W_{2}\) and \(W_{3}\). In particular, the action of \(T\) on \(\{W_{1}, W_{2}, W_{3}\}\) is isomorphic to \(\C_{3}\)
\end{lemma}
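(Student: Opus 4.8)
The plan is to show that $T$ permutes the three subspaces $W_1=\langle u_1,u_3\rangle$, $W_2=\langle u_2,u_6\rangle$, $W_3=\langle u_4,u_5\rangle$, and that the induced permutation action is cyclic of order $3$. First I would compute the fixed-point subspaces of the three involutions $\gamma,\sigma,\delta$ inside $W$. From the sign patterns given in the excerpt we read off directly that, restricted to $W=\langle u_1,\ldots,u_6\rangle$, the involution $\gamma$ negates $u_2,u_4,u_5,u_6$ and fixes $u_1,u_3$, so $C_W(\gamma)=W_1$; similarly $\sigma$ fixes $u_4,u_5$ and negates $u_1,u_2,u_3,u_6$, so $C_W(\sigma)=W_3$; and $\delta=\sigma\gamma$ fixes $u_2,u_6$ and negates $u_1,u_3,u_4,u_5$, so $C_W(\delta)=W_2$. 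Thus the three $2$-dimensional subspaces $W_1,W_2,W_3$ are intrinsically characterised as the fixed spaces in $W$ of the three nontrivial elements $\gamma,\delta,\sigma$ of $K$.

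Next I would exploit this intrinsic description to transport the permutation action of $T$ on $\{W_1,W_2,W_3\}$ to the conjugation action of $T$ on the three involutions of $K$. Since $T=\N_G(K)$ normalises $K\cong 2^2$, it permutes the set $\{\gamma,\delta,\sigma\}$ of involutions of $K$ by conjugation. For $t\in T$ and $x\in\{\gamma,\delta,\sigma\}$, a point $u\in W$ lies in $C_W(x)$ iff $u^t\in C_W(x^t)$, because $t$ leaves $W$ invariant by Lemma~\ref{l:H} and $x^t$ is again one of the three involutions; hence $t$ sends $C_W(x)$ to $C_W(x^t)$. Therefore the action of $T$ on $\{W_1,W_2,W_3\}$ is \emph{equivalent} to its conjugation action on $\{\gamma,\delta,\sigma\}$, so it suffices to determine the latter.

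For the conjugation action, the key input is the structure $T\cong(2^2\times \D_{(q+1)/2}):3$ from Lemma~\ref{l:H} (via the identification $T=\N_G(K)$). Here $K=\langle\gamma,\sigma\rangle$ is the displayed $2^2$ factor, which is central in the subgroup $2^2\times \D_{(q+1)/2}$; consequently the dihedral and $2^2$ direct factors act trivially on $K$ by conjugation, and the only part of $T$ that can act nontrivially on the three involutions of $K$ is the quotient $T/C_T(K)$. Since $\Aut(2^2)\cong \Sym_3$ and the complement in the extension has order $3$, I would argue that $T$ induces on $K$ a group of automorphisms that is cyclic of order $3$: it cannot be trivial (otherwise the ":3" would centralise $K$, but then it would centralise $\gamma,\sigma$ and one checks this is incompatible with $T$ having the stated structure with $K$ self-centralising-modulo-the-dihedral-part), and it cannot be all of $\Sym_3$ since $T$ has no section of order $2$ acting faithfully on $K$ beyond what lies in $C_T(K)$. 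Hence $T/C_T(K)\cong \C_3$, acting as a $3$-cycle on $\{\gamma,\delta,\sigma\}$ and therefore on $\{W_1,W_2,W_3\}$, giving the claim.

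The main obstacle I anticipate is pinning down exactly which subgroup of $\Aut(K)\cong\Sym_3$ is induced: a priori $T$ could induce the trivial group, $\C_3$, or $\Sym_3$, and only the middle case yields the statement. The cleanest way to resolve this is to locate, within the presentation of $T$ as $(2^2\times \D_{(q+1)/2}):3$, an explicit order-$3$ element realising the $3$-cycle and to verify no involution of $T$ acts as a transposition on $\{\gamma,\delta,\sigma\}$; equivalently, one shows $C_T(K)=2^2\times \D_{(q+1)/2}$ has index $3$ in $T$ and that the order-$3$ complement acts nontrivially. I would expect the authors to have a concrete order-$3$ field-or-graph automorphism on hand (the ":3" factor) whose action on the basis can be computed directly from Wilson's description, which would simultaneously show the action is nontrivial and is exactly a $3$-cycle, completing the proof.
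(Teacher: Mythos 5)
Your proposal is correct and takes essentially the same route as the paper: identify each $W_i$ as the fixed space $C_W(x)$ of one of the three involutions $x\in K$, use $C_W(x)^t=C_W(x^t)$ (valid since $T$ preserves $W$) to transport the action on $\{W_1,W_2,W_3\}$ to the conjugation action on $\{\gamma,\delta,\sigma\}$, and then conclude from $C_G(K)\cong 2^2\times \D_{\frac{q+1}{2}}$ having index $3$ in $T=\N_G(K)$ that the induced permutation group is $\C_3$. The paper does not exhibit an explicit order-$3$ element as you suggest one might; it settles the trivial-versus-$\C_3$ dichotomy exactly as you do, by noting that the kernel of the action is $C_G(K)$, of index $3$.
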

\begin{proof}
Since \(T\) normalises \(K=\langle \sigma, \gamma\rangle\), we see that \(T\) permutes \(\sigma, \gamma\) and \(\delta\). We also note that \(\C_{W}(\gamma)=W_{1}\), \(\C_{W}(\delta)=W_{2}\) and \(\C_{W}(\sigma)=W_{3}\). For \(i\in\{1,2,3\}\), \(W_{i}=\C_{W}(x)\) for some \(x\in\{\sigma,\gamma,\delta\}\). Since $T$ leaves $W$ invariant we have that \(\C_{W}(x)^{t}=\C_{W}(x^{t})\) for each $x$.
Thus  \(W_{i}^{t}=\C_{W}(x)^{t}=\C_{W}(x^{t})\). Since \(x^{t}\in\{\sigma,\gamma,\delta\}\), we find that \(W_{i}^{t}\in \{W_{1}, W_{2}, W_{3}\}\). Moreover, we find that \(\C_{G}(K)\cong 2^{2}\times \D_{\frac{q+1}{2}}\), so \([T:\C_{G}(K)]=3\). We note that \(\C_{G}(K)\) acts trivially on \(\{W_{1}, W_{2}, W_{3}\}\). This implies the kernel of the action of \(T\) on \(\{W_{1}, W_{2}, W_{3}\}\) is of index 3, so the action is isomorphic to \(\C_{3}\). Hence the result follows.
\end{proof}
In Wilson's description~\cite[p 136]{Wilson}, $G$ has a Borel subgroup $B$ such that there exists \(g\in B\)  determined by its action on the basis $\mathcal{C}$ as follows:
\[\begin{array}{llp{1cm}ll}
v_{1}&\mapsto v_{1}& &%+2v_{2}+v_{3}+2v_{5}+v_{6}+2v_{7},\\
v_{2}&\mapsto v_{2}\\%+2v_{3}+v_{7},\\
v_{3}&\mapsto v_{1}+v_{3}& &%+v_{5}+v_{6}+v_{7},\\
v_{4}&\mapsto v_{2}+v_{4}\\%+2v_{6},\\
v_{5}&\mapsto 2v_{1}+v_{5}& &
v_{6}&\mapsto v_{2}+2v_{4}+v_{6}\\
v_{7}&\mapsto v_{1}+v_{3}+2v_{5}+v_{7}.
\end{array}\]
It is easily checked that  \(g\) acts on \(u_{i}\) by:
\[\begin{array}{llp{1cm}ll}
u_{0}&\mapsto u_{0}& &%+v_{2}+v_{5}+v_{6}+2v_{7},\\
u_{1}&\mapsto u_{2}\\%+v_{3}+2v_{4}+v_{5}+v_{7},\\
u_{2}&\mapsto u_{4}& &%+2v_{4}+v_{5}+2v_{7},\\
u_{3}&\mapsto u_{6}\\%v_{4}+v_{5},\\
u_{4}&\mapsto u_{1}& &
u_{5}&\mapsto u_{3}\\
u_{6}&\mapsto u_{5}.
%v_{5}&\mapsto v_{5}+2v_{6}+v_{7},\\
%v_{6}&\mapsto v_{6}+2v_{7},\\
%v_{7}&\mapsto v_{7}.
\end{array}\]
In particular we find that \(g\in T=\N_{G}(K)\) and \(W_{i}^{g}=W_{i+1}\) for all \(W_i\in\{W_{1}, W_{2}, W_{3}\}\).

\begin{lemma}
Let \(H\) and \(g\) be as above. Then \(g^{-1}\notin HgH\), and \(\Cos(G, H,g)\) is a \((G,1)\)-arc-transitive digraph.
\end{lemma}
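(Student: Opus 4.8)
The plan is to reduce the whole statement to the single claim that $g^{-1}\notin HgH$: the general construction established at the start of this section guarantees that, once this holds, $\Cos(G,H,g)$ is a digraph and $\mathrm{R}_H(G)\cong G$ acts arc-transitively on it, which is exactly the $(G,1)$-arc-transitivity we want. The strategy for proving $g^{-1}\notin HgH$ is to track the conjugation action on the involution $\sigma$, using the key asymmetry that every element of $H=C_G(\sigma)$ fixes $\sigma$ under conjugation whereas $g$ moves it.

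First I would record how $g$ conjugates the three involutions $\gamma,\delta,\sigma$ of $K$. Since $g\in T=\N_G(K)$ leaves $W$ invariant and satisfies $W_i^g=W_{i+1}$ (indices mod $3$), and since $C_W(\gamma)=W_1$, $C_W(\delta)=W_2$, $C_W(\sigma)=W_3$ with $C_W(x)^g=C_W(x^g)$, the fact that these three $2$-dimensional subspaces are distinct forces $\gamma^g=\delta$, $\delta^g=\sigma$ and $\sigma^g=\gamma$. From $\delta^g=\sigma$ I then get $\sigma^{g^{-1}}=\delta$.

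Next, suppose for a contradiction that $g^{-1}=h_1 g h_2$ for some $h_1,h_2\in H$. Conjugating $\sigma$ by both sides: the left-hand side yields $\sigma^{g^{-1}}=\delta$, while on the right, because $h_1,h_2\in C_G(\sigma)$ fix $\sigma$ but $\sigma^g=\gamma$, I obtain $\sigma^{h_1 g h_2}=\gamma^{h_2}$. Hence $\gamma^{h_2}=\delta$, which rearranges to $[\gamma,h_2]=\gamma^{-1}\delta=\gamma\delta=\sigma$, using $\delta=\sigma\gamma$ and $\gamma^2=1$.

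The final step, which is the crux of the argument, is to see that $[\gamma,h_2]=\sigma$ is impossible. For this I would invoke the stated structure $H\cong 2\times\PSL_2(q)$: since $q>3$ the factor $\PSL_2(q)$ is nonabelian simple with trivial centre, so the central involution $\sigma$ must generate the direct factor of order $2$, and therefore $\sigma\notin\PSL_2(q)=[H,H]$. But $[\gamma,h_2]$ is a commutator of elements of $H$ and so lies in $[H,H]$, whence it cannot equal $\sigma$. This contradiction shows $g^{-1}\notin HgH$ and completes the proof. I expect the main obstacle to be the bookkeeping in the first step, namely correctly deducing $\sigma^g=\gamma$ and $\sigma^{g^{-1}}=\delta$ from the $W_i$-permutation data; once this conjugation action is pinned down, the commutator/direct-product argument is immediate.
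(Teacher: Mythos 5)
Your proof is correct, and it reaches the contradiction by a genuinely different route from the paper. Both arguments start from the same data (that $g\in T=\N_G(K)$ cycles $W_1\mapsto W_2\mapsto W_3\mapsto W_1$, and that $C_W(\gamma)=W_1$, $C_W(\delta)=W_2$, $C_W(\sigma)=W_3$), but you convert this into the conjugation action of $g$ on the involutions of $K$, obtaining the $3$-cycle $\gamma^g=\delta$, $\delta^g=\sigma$, $\sigma^g=\gamma$ (which checks out by direct computation on the basis $\mathcal{B}$), and then exploit the fact that elements of $H=\C_{G}(\sigma)$ fix $\sigma$: conjugating $\sigma$ along $g^{-1}=h_1gh_2$ forces $\gamma^{h_2}=\delta$, hence $[\gamma,h_2]=\gamma\delta=\sigma$, contradicting $\sigma\in Z(H)\setminus[H,H]$ in $H\cong 2\times\PSL_2(q)$ (note $\gamma\in\C_G(\sigma)=H$ since $\gamma$ and $\sigma$ commute, so this really is a commutator in $H$). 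The paper instead keeps the argument linear-algebraic: it first shows via the eigenspaces of $\sigma$ that $x,y$ fix $\langle u_0\rangle$ and hence lie in $T$, then shows they must fix each $W_i$ because $T$ acts on $\{W_1,W_2,W_3\}$ as $\C_3$, and derives the contradiction $W_1^{xg}=W_2\neq W_3=W_1^{g^{-1}y}$. Your version buys a shorter endgame that leans on the direct-product structure of $H$ and avoids the eigenspace computation for $x$ and $y$; the paper's version stays entirely inside the $7$-dimensional representation and does not need to identify where $\sigma$ sits relative to $[H,H]$. Both are complete proofs of the statement.
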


\begin{proof}
Let $g'=g^{-1}$. If \(g'\notin HgH\), then \(\Cos(G, H, g)\) is a \((G,1)\)-arc-transitive digraph. So it is sufficient to prove that \(g'\notin HgH\).
Suppose that this is not the case. that is, there exist \(x, y\in H=\C_G(\sigma)\) such that \(g'=x^{-1}gy\), or equivalently, \(xg=g'y\). Let us denote by \(E_{1}\) and \(E_{-1}\) the eigenspaces of \(\sigma\) with eigenvalues 1 and $-1$ respectively. Indeed, \(E_{1}=\langle u_{0}, u_{4}, u_{5}\rangle\) and \(E_{-1}=\langle u_{1},u_{2}, u_{3},u_{6}\rangle\). \begin{claim}\label{xy}
We have \(u_{0}^{x}, u_{0}^{y}\in \langle u_{0}\rangle\).
\end{claim}
First we note that since \(x, y\in H\), \(x\sigma=\sigma x\) and \(y\sigma=\sigma y\). This implies that \(u_{0}^{x\sigma}=u_{0}^{\sigma x}=u_{0}^{x}\). Thus \(u_{0}^{x}\in E_{1}\), so that \(u_{0}^{x}=\alpha_{0}u_{0}+\alpha_{4}u_{4}+\alpha_{5}u_{5}\) for some \(\alpha_{i}\in \mathbb{F}_{q}\). Similarly \(u_{0}^{y}=\beta_{0}u_{0}+\beta_{4}u_{4}+\beta_{5}u_{5}\) for some \(\beta_{i}\in \mathbb{F}_{q}\).
\\Now, we let \(I=\{0,4,5\}\) and since \(xg=g'y\), we have:
\begin{equation}
    \sum_{i\in I}\beta_{i}u_{i}=u_{0}^{y}=u_{0}^{g'y}=u_{0}^{xg}=\sum_{i\in I}\alpha_{i}u_{i}^{g}=\beta_{0}u_{0}+\beta_{4}u_{1}+\beta_{5}u_{3}.
    \end{equation} This implies that \(\alpha_{0}=\beta_{0}\) and \(\alpha_{4}=\alpha_{5}=\beta_{4}=\beta_{5}=0\). Thus the claim is proved.

\medskip

 Let us consider the action of \(x\) and \(y\) on \(W_{3}\). For any \(v\in W_{3}=\langle u_{4}, u_{5}\rangle\), we have that \(v^{x\sigma}=v^{\sigma x}=v^{x}\) and \(v^{y\sigma }=v^{\sigma y}=v^{y}\). Thus \(v^{x}, v^{y}\in E_{1}=\langle u_{0}, u_{4}, u_{5}\rangle\). On the other hand, by Claim \ref{xy} we find that \(x,y\in T\). By Lemma \ref{permuteW_i} \(W_{3}^{x}, W_{3}^{y}\in \{W_{1}, W_{2}, W_{3}\}\). Hence \(v^{x}, v^{y}\in E_{1}\cap W_{i}\) for some \(i\in\{1,2,3\}\). Since the only possible \(i\) such that \(E_{1}\cap W_{i}\neq\{0\}\) is 3, we deduce that \(W_{3}^{x}=W_{3}^{y}=W_{3}\). Hence \(x\) and \(y\) either swap or fix \(W_{1}\) and \(W_{2}\). If \(x\) and \(y\) swap them, then \(x, y\) acts as \((1,2)\) on \(\{W_{1}, W_{2}, W_{3}\}\). However, by Lemma \ref{permuteW_i}, the action of \(T\) on \(\{W_{1}, W_{2}, W_{3}\}\) is isomorphic to \(\C_{3}\) and does not have any element of order 2. Thus we deduce that \(W_{i}^{x}=W_{i}^{y}=W_{i}\) for \(i=1,2\).

 \medskip
 
 Now let us consider the action of \(xg\) and \(g'y\) on \(W_{1}\), 
\begin{equation}
    W_{1}^{xg}=W_{1}^{g}=W_{2}
\end{equation}
while
\begin{equation}
    W_{1}^{g'y}=W_{3}^{y}=W_{3}.
\end{equation}
This is a contradiction to \(xg=g'y\). Hence such \(x\) and \(y\) do not exist and the result follows.
\end{proof}

%\begin{acknowledgements}
%We thank the two anonymous referees for their
%\end{acknowledgements}

\end{document}